\newcommand{\R}{\mathbb R}
\newcommand{\norm}[1]{\left\Vert #1 \right\Vert}
\DeclareMathOperator{\vettop}{vec}
\newtheorem{theorem}{Theorem}
\theoremstyle{remark}
\newtheorem{remark}[theorem]{Remark}
\newtheorem{lemma}[theorem]{Lemma}
\newtheorem{corollary}[theorem]{Corollary}
\newcommand{\vett}[2]{\begin{bmatrix}#1 \\ #2 \end{bmatrix}}
\newcommand{\twotwo}[4]{\begin{bmatrix}#1 & #2\\ #3 & #4\end{bmatrix}}
\author{Federico Poloni\thanks{Scuola Normale Superiore, piazza dei Cavalieri, 7; 56126 Pisa, Italy. Phone: +39 050 509111. Fax: +39 050 563513. E-mail \href{mailto:f.poloni@sns.it}{f.poloni@sns.it} }}
\title{Quadratic Vector Equations}
\date{}
\begin{document}
\maketitle

\begin{abstract}
We study in a unified fashion several quadratic vector and matrix equations with nonnegativity hypotheses, by seeing them as special cases of the general problem
$
 Mx=a+b(x,x)
$,
where $a$ and the unknown $x$ are componentwise nonnegative vectors, $M$ is a nonsingular M-matrix, and $b$ is a bilinear map from pairs of nonnegative vectors to nonnegative vectors. Specific cases of this equation have been studied extensively in the past by several authors, and include unilateral matrix equations from queuing problems [Bini, Latouche, Meini, 2005], nonsymmetric algebraic Riccati equations [Guo, Laub, 2000], and quadratic matrix equations encountered in neutron transport theory [Lu, 2005]. 

We present a unified approach which treats the common aspects of their theoretical properties and basic iterative solution algorithms. This has interesting consequences: in some cases, we are able to derive in full generality theorems and proofs appeared in literature only for special cases of the problem; this broader view highlights the role of hypotheses such as the strict positivity of the minimal solution. In an example, we adapt an algorithm derived for one equation of the class to another, with computational advantage with respect to the existing methods. We discuss possible research lines, including the relationship among Newton-type methods and the cyclic reduction algorithm for unilateral quadratic equations.
\end{abstract}

\noindent {\bf Keywords: } quadratic vector equation, nonsymmetric algebraic Riccati equation, quasi-block-diagonal queue, Newton's method, functional iteration, nonnegative matrix.

\noindent {\bf MSC classes: } 15A24, 65F30

\section{Introduction}
In this paper, we aim to study in a unified fashion several quadratic vector and matrix equations with nonnegativity hypotheses. Specific cases of such problems have been studied extensively in the past by several authors. For references to the single equations and results, we refer the reader to the following sections, in particular \autoref{s:concrete}. Many of the results appearing here have already been proved for one or more of the single instances of the problems, resorting to specific characteristics of the problem. In some cases the proofs we present here are mere rewritings of the original proofs with a little change of notation to adapt them to our framework, but in some cases we extend the existing results to other problems of the considered class and understand the role of some key assumptions such as the positivity of the minimal solution.

It is worth noting that Ortega and Rheinboldt \cite[Chapter 13]{OrtegaRheinboldt}, in a 1970 book, treat a similar problem in a far more general setting, assuming only the monotonicity and operator convexity of the involved operator. Since their hypotheses are far more general than those of our problem, the obtained results are less precise than those we are reporting here. Moreover, all of their proofs have to be adapted to our case, since the operator $F(x)$ we are dealing with is operator concave instead of convex.

\paragraph{Useful results on $M$-matrices}
 In the following, $A \geq B$ (resp. $A>B$) means $A_{ij} \geq B_{ij}$ (resp. $A_{ij}>B_{ij}$) for all $i,j$. A real square matrix $Z$ is said \emph{$Z$-matrix} if $Z_{ij} \leq 0$ for all $i \neq j$. A $Z$-matrix is said an $M$-matrix if it can be written in the form $sI-P$, where $P\geq 0$ and $s\geq \rho(P)$ and $\rho(\cdot)$ denotes the spectral radius.

We make use on the following results.
\begin{theorem}\label{Mmatrices}
The following facts hold.
 \begin{enumerate}
  \item If $Z$ is a $Z$-matrix and there exists a vector $v> 0$ such that $Zv \geq 0$, then $Z$ is an M-matrix;\label{firstitem}
  \item If $Z$ is a $Z$-matrix and $Z \geq M$ for an $M$-matrix $M$, then $Z$ is an $M$-matrix. \label{Mmatrices2}
  \item A nonsingular $Z$-matrix $Z$ is an $M$-matrix if and only if $Z^{-1}\geq 0$.
  \item A $Z$-matrix $Z$ is a nonsingular $M$-matrix if and only if it has a representation as $N-P$, where $N$ is a nonsingular $M$-matrix, $P\geq 0$  and $\rho(N^{-1}P)<1$. \label{Mmatrices4}
  \item A $Z$-matrix $Z$ is an $M$-matrix if and only if it has a representation as $N-P$, where $N$ is a nonsingular $M$-matrix, $P\geq 0$  and $\rho(N^{-1}P)\leq 1$. \label{Mmatrices5}
 \end{enumerate}
\end{theorem}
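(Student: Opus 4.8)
These are the standard basic properties of (possibly singular) $M$-matrices (they can be found, in one form or another, in the monograph of Berman and Plemmons), and the plan is to prove them from scratch, using a small toolkit and dispatching the items in an order in which the later ones may invoke the earlier ones. The toolkit I would set up first consists of three elementary facts, each proved by a one-line induction or limit argument: (i) weak Perron--Frobenius, i.e.\ for $P\ge0$ the spectral radius $\rho(P)$ is an eigenvalue of $P$ with a nonzero eigenvector $w\ge0$; (ii) the two Collatz--Wielandt bounds, i.e.\ for $P\ge0$ and $v>0$ one has $\rho(P)\le\max_i(Pv)_i/v_i$, while for $P\ge0$, $w\ge0$, $w\ne0$ with $Pw\ge\lambda w$ one has $\rho(P)\ge\lambda$; (iii) the Neumann series fact, i.e.\ for $P\ge0$, $\rho(P)<1$ if and only if $I-P$ is invertible with $(I-P)^{-1}=\sum_{k\ge0}P^k\ge0$. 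I will also use the immediate consequences $0\le A\le B\Rightarrow\rho(A)\le\rho(B)$ and $\rho(P+cI)=\rho(P)+c$ for $P\ge0$, $c\ge0$.

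With the toolkit in hand the first four items are routine. For the first, pick $s$ large enough that $P:=sI-Z\ge0$ (the off-diagonal sign is automatic since $Z$ is a $Z$-matrix); then $Zv\ge0$ says $Pv\le sv$ with $v>0$, so (ii) gives $\rho(P)\le s$ and $Z=sI-P$ is an $M$-matrix. For the second, write $M=s_0I-P_0$ with $P_0\ge0$, $s_0\ge\rho(P_0)$, take $s\ge\max(s_0,\max_i Z_{ii})$, and observe $0\le sI-Z\le sI-M=P_0+(s-s_0)I$, whence $\rho(sI-Z)\le\rho(P_0)+(s-s_0)\le s$. For the third: if a nonsingular $M$-matrix is written $Z=sI-P$ with $P\ge0$, $s\ge\rho(P)$, then $s\ne\rho(P)$ by (i) (otherwise $Z$ would be singular), so $s>\rho(P)\ge0$ and $Z^{-1}=s^{-1}\sum_{k\ge0}(P/s)^k\ge0$ by (iii); conversely, if $Z$ is a nonsingular $Z$-matrix with $Z^{-1}\ge0$, set $s:=\max(1,\max_i Z_{ii})>0$ and $P:=sI-Z\ge0$, pick $e>0$ and $v:=Z^{-1}e\ge0$, and note that $sv=e+Pv>0$ forces $v>0$, so $Pv=sv-e<sv$ and $\rho(P)<s$ by (ii). The same $v$-trick gives the reusable remark that \emph{any nonsingular $M$-matrix written as $tI-S$ with $S\ge0$ and $t>0$ satisfies $t>\rho(S)$}. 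The fourth item is trivial in the forward direction ($N=Z$, $P=0$), and in the converse $Z=N(I-N^{-1}P)$ with $N^{-1}\ge0$ by the third item and $\rho(N^{-1}P)<1$, so $(I-N^{-1}P)^{-1}\ge0$ exists by (iii), $Z^{-1}=(I-N^{-1}P)^{-1}N^{-1}\ge0$, and the third item concludes. Finally, the forward direction of the fifth item: write $Z=sI-Q$ with $Q\ge0$, $s\ge\rho(Q)$ and $s>0$ (replacing $(s,Q)$ by $(s+1,Q+I)$ if necessary), and take $N=sI$, $P=Q$, so that $\rho(N^{-1}P)=\rho(Q)/s\le1$.

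The only genuinely non-routine point is the converse of the fifth item. There $Z=N-P$ with $N=sI-R$ a nonsingular $M$-matrix ($R\ge0$, $s>\rho(R)$, so $s>0$), and $Z=sI-(R+P)$ is already a $Z$-matrix in the desired shape; thus everything reduces to the following \emph{key lemma}: if $R,P\ge0$, $s>\rho(R)$ and $\rho\bigl((sI-R)^{-1}P\bigr)\le1$, then $\rho(R+P)\le s$. Scaling by $s$, this says that $\rho(\hat R)<1$ together with $\rho\bigl((I-\hat R)^{-1}\hat P\bigr)\le1$ imply $\rho(\hat R+\hat P)\le1$. I would prove it by perturbation: for each $t>1$ I claim $\rho\bigl((tI-\hat R)^{-1}\hat P\bigr)<1$. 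If not, use (i) to get $w\ge0$, $w\ne0$ with $(tI-\hat R)^{-1}\hat P\,w\ge w$, i.e.\ $\hat Pw\ge tw-\hat Rw$; apply the nonnegative matrix $(I-\hat R)^{-1}$ and use the identity $(I-\hat R)^{-1}\hat R=(I-\hat R)^{-1}-I$ to get $(I-\hat R)^{-1}\hat P\,w\ge(t-1)(I-\hat R)^{-1}w+w\ge tw$ (using $(I-\hat R)^{-1}\ge I$), so $\rho\bigl((I-\hat R)^{-1}\hat P\bigr)\ge t>1$ by (ii), a contradiction. Hence $\rho\bigl((tI-\hat R)^{-1}\hat P\bigr)<1$ for every $t>1$; since $tI-\hat R$ is a nonsingular $M$-matrix, the fourth item makes $(tI-\hat R)-\hat P=tI-(\hat R+\hat P)$ a nonsingular $M$-matrix, and the reusable remark gives $t>\rho(\hat R+\hat P)$. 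Letting $t\downarrow1$ yields $\rho(\hat R+\hat P)\le1$, which proves the key lemma and with it the fifth item.

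The main obstacle is exactly this last step. Because the hypothesis $\rho(N^{-1}P)\le1$ is borderline, the fourth item cannot be applied to $Z$ directly; one must perturb to $t>1$, prove a \emph{strict} spectral bound there, and then pass to the limit, and it is the sub-eigenvector form of Collatz--Wielandt in (ii) that converts the borderline hypothesis into the strict inequality needed. Everything else is bookkeeping with the $M$-matrix toolkit.
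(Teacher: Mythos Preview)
Your argument is correct. The paper simply cites Berman--Plemmons for items~1--4 and gives a short self-contained proof only of item~5, so your write-up is considerably more detailed on the first four points.

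For item~5 the strategies are related but not identical. Both perturb the borderline hypothesis $\rho(N^{-1}P)\le1$ into a strict one, apply item~4 to the perturbed splitting, extract a strict bound on $\rho$ of the ``$Q$-part'' via the same reusable remark, and pass to the limit. The difference is in \emph{what} is perturbed: the paper shrinks $P$ to $\frac{1}{1+\varepsilon}P$, which gives $\rho\bigl(N^{-1}\frac{1}{1+\varepsilon}P\bigr)=\frac{1}{1+\varepsilon}\rho(N^{-1}P)<1$ for free, while you inflate the scalar in $N$ from $1$ to $t>1$ and must then work (the Collatz--Wielandt contradiction) to show $\rho\bigl((tI-\hat R)^{-1}\hat P\bigr)<1$. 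The paper's perturbation is therefore shorter; yours buys nothing extra but is a legitimate alternative. One expository point worth tightening in your version: the passage ``$(tI-\hat R)^{-1}\hat P\,w\ge w$, i.e.\ $\hat Pw\ge tw-\hat Rw$'' reads as if you multiplied an inequality by $tI-\hat R$, which is not order-preserving. What actually justifies it is that weak Perron--Frobenius gives the \emph{equality} $(tI-\hat R)^{-1}\hat P\,w=\rho w$, whence $\hat Pw=\rho(tI-\hat R)w$; since $\hat Pw\ge0$ and $\rho>0$ this forces $(tI-\hat R)w\ge0$, and then $\rho\ge1$ yields $\hat Pw\ge(tI-\hat R)w$. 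With that one-line fix (or by bypassing the intermediate inequality and computing $(I-\hat R)^{-1}\hat Pw$ directly from the eigen-equation) your proof goes through cleanly.
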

\begin{proof}
 Items~\ref{firstitem}--\ref{Mmatrices4} are found in \cite{BermanPlemmons}. We report here a self-contained proof of item~\ref{Mmatrices5}, which was suggested by one of the referees of this paper.
\begin{itemize}
 \item[$\Rightarrow$] if $Z$ is an $M$-matrix, $Z=sI-B$ with $B\geq 0$ and $\rho(B)\leq s$. Then $N=(s+1)I$, $P=B+I$ is a splitting with the required properties.
 \item[$\Leftarrow$] Let $Z=N-P$ be a splitting with the stated properties, and let $N=tI-C$ with $C\geq 0$, $\rho(C)<t$. For any $\varepsilon>0$ we have $\rho(N^{-1}\frac{1}{1+\varepsilon}P)<1$, thus $Z_\varepsilon=N-\frac{1}{1+\varepsilon}P$ is a nonsingular $M$-matrix. Then $Z_\varepsilon=tI-(C+\frac{1}{1+\varepsilon}P)$, so $t>C+\frac{1}{1+\varepsilon}P$. Letting $\varepsilon\to 0^+$ gives $t\geq \rho(C+P)$. Therefore, $Z=tI-(C+P)$ is an $M$-matrix. \qedhere
\end{itemize}
\end{proof}
Moreover, we need the following extension of item~\ref{Mmatrices2}.
\begin{theorem}\label{twoconditions}
 Under the hypotheses of item~\ref{Mmatrices2} of Theorem~\ref{Mmatrices}, if at least one of the following two additional conditions holds
\begin{itemize}
 \item $M$ is nonsingular,
 \item $M$ is irreducible and $Z\neq M$,
\end{itemize}
then $Z$ is nonsingular.
\end{theorem}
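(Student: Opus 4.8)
The plan is to reduce the statement to a spectral-radius comparison using a single common shift. Since $M$ is an $M$-matrix we may write $M=sI-B$ with $B\ge 0$ and $s\ge\rho(B)$; I would enlarge $s$ if necessary so that in addition $s\ge\max_i Z_{ii}$. With this $s$ fixed, set $C:=sI-Z$. Because $Z$ is a $Z$-matrix its off-diagonal entries are nonpositive, and by the choice of $s$ its diagonal entries are at most $s$, so $C\ge 0$. The hypothesis $Z\ge M$ becomes $C\le B$, and by item~\ref{Mmatrices2} of Theorem~\ref{Mmatrices} we already know $Z$ is an $M$-matrix; hence it only remains to show $\rho(C)<s$, which is exactly the statement that $Z=sI-C$ is a \emph{nonsingular} $M$-matrix.

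In the first case, $M$ nonsingular, I would use the standard fact that a $Z$-matrix written as $sI-B$ with $B\ge 0$ is a nonsingular $M$-matrix if and only if $s>\rho(B)$; thus $\rho(B)<s$. Since $0\le C\le B$, monotonicity of the spectral radius on nonnegative matrices gives $\rho(C)\le\rho(B)<s$, so $Z$ is a nonsingular $M$-matrix.

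In the second case, $M$ irreducible and $Z\ne M$, note that $B=sI-M$ is irreducible, since it differs from $M$ only on the diagonal. Moreover $0\le C\le B$ and $C\ne B$ (the latter because $Z\ne M$). The Perron--Frobenius theorem for irreducible nonnegative matrices then yields the \emph{strict} inequality $\rho(C)<\rho(B)$; together with $\rho(B)\le s$, which holds for any $M$-matrix representation, this gives $\rho(C)<s$, and again $Z$ is a nonsingular $M$-matrix.

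The only delicate points are bookkeeping: arranging that the same shift $s$ serves for both $M$ and $Z$ while keeping $C\ge 0$, and, in the irreducible case, invoking the correct strengthening of Perron--Frobenius (strict monotonicity of the spectral radius when the dominating matrix is irreducible and the entrywise inequality is strict somewhere). Neither is a genuine obstacle; conceptually the whole proof is just monotonicity of $\rho(\cdot)$, used in its weak form for the nonsingular case and in its strict form for the irreducible case.
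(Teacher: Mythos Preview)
Your proof is correct and follows essentially the same route as the paper: the paper's proof is a one-line appeal to the monotonicity (and strict monotonicity in the irreducible case) of the Perron value of a nonnegative matrix, and your argument is precisely the natural way to unpack that remark, by passing to a common shift $s$ so that $M=sI-B$ and $Z=sI-C$ with $0\le C\le B$. The only cosmetic point is that $B=sI-M$ differs from $M$ by negation off the diagonal as well, but of course the nonzero pattern---hence irreducibility---is unaffected, so your conclusion stands.
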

\begin{proof}
The results follow from the fact that the Perron value of a nonnegative matrix is an nondecreasing function of its entries, and a strictly increasing one if the matrix is irreducible \cite{BermanPlemmons}.
\end{proof}

\section{General problem} 
We are interested in solving the equation
\begin{equation} \label{qve}
 Mx=a+b(x,x)
\end{equation}
(quadratic vector equation, QVE) where $M\in \R^{n \times n}$ is a nonsingular $M$-matrix, $a,x\in \R^n$, $a,x\geq 0$, and $b$ is a nonnegative vector bilinear form, i.e., a map $b: \R^n \times \R^n \to \R^n$ such that
\begin{enumerate}
 \item $b(v,\cdot)$ and $b(\cdot,v)$ are linear maps for each $v \in \R^n$ (bilinearity);
 \item $b(x,y) \geq 0$ for all $x,y \geq 0$ (nonnegativity).
\end{enumerate}
The map $b$ can be represented by a tensor $B_{ijk}$, in the sense that $b(x,y)_k=\sum_{i,j=1}^n B_{ijk} x_i y_j$. It is easy to prove that $0\leq x\leq y$ and $0\leq z \leq w$ imply $b(x,z)\leq b(y,w)$. If $N$ is a nonsingular M-matrix,  $N^{-1}B$ denotes the tensor representing the map $(x,y) \mapsto N^{-1}b(x,y)$. Note that, here and in the following, we do not require that $b$ be symmetric (that is, $b(x,y)=b(y,x)$ for all $x,y$): while in the equation only the quadratic form associated to $b$ is used, in the solution algorithms there are often terms of the form $b(x,y)$ with $x \neq y$. Since there are multiple ways to extend the quadratic form $b(x,x)$ to a bilinear map $b(x,y)$, this leaves more freedom in defining the actual solution algorithms.

We are only interested in nonnegative solutions $x_\ast \geq 0$; in the following, when referring to solutions of \eqref{qve} we shall always mean \emph{nonnegative} solutions only. A solution $x_\ast$ of \eqref{qve} is said \emph{minimal} if $x_\ast \leq y$ for any other solution $y$.

Later on, we give a necessary and sufficient condition for \eqref{qve} to have a minimal solution.

\section{Concrete cases}\label{s:concrete}
\begin{description}
 \item [E1: Markovian binary trees] in \cite{BeanKontoleonTaylor, HautphenneLatoucheRemiche}, the equation
\[
x=a+b(x,x),
\]
with the assumption that $e=(1,1,\dots,1)^T$ is a solution, arises from the study of Markovian binary trees.
\item [E2: Lu's simple equation] in \cite{Lu02, LuNewton}, the equation
\[
 \begin{cases}
  u= u \circ (Pv) +e,\\
  v= v \circ (\tilde Pu) +e,
 \end{cases}
\]
where $u,v \in \R^m$ are the unknowns, $e$ is as above, $P$ and $\tilde P$ are two given nonnegative $m \times m$ matrices, and $a \circ b$ denotes the Hadamard (component-wise) product, arises as a special form of a Riccati equation appearing in a neutron transport problem. By setting $w:=[u^T\,v^T]^T$, the equation takes the form \eqref{qve}. 
\item[E3: Nonsymmetric algebraic Riccati equation] in \cite{GuoNAREandWienerHopf}, the equation
\[
 XCX+B-AX-XD=0,
\]
where $X,B \in \R^{m_1 \times m_2}$, $C \in \R^{m_2 \times m_1}$, $A \in \R^{m_1 \times m_1}$, $D \in \R^{m_2 \times m_2}$, and
\begin{equation}\label{nareM}
 \mathcal M=\begin{bmatrix}
             D &-C\\ -B & A
            \end{bmatrix}
\end{equation}
is a nonsingular or singular irreducible M-matrix, is studied. Vectorizing everything, we get
\[
 (I \otimes A+D^T \otimes I) \vettop(X)=\vettop(B)+\vettop(XCX),
\]
which is in the form \eqref{qve} with $n=m_1m_2$.
\item[E4: Unilateral quadratic matrix equation] in several queuing problems \cite{BiniLatoucheMeiniBook}, the equation
\[
 X=A+BX+CX^2,
\]
with $A,B,C,X \in \R^{m \times m}$, $A,B,C\geq 0$, and $(A+B+C)e=e$, is considered. Vectorizing everything, we again get the same class of equations, with $n=m^2$: in fact, since $Be \leq e$, $(I-B)e \geq 0$ and thus $I-B$ is an M-matrix.
\end{description}

To ease the notation in the cases E3 and E4, in the following we shall set $x_k=\vettop(X_k)$, and for E3 also $m=\max(m_1,m_2)$.

\section{Minimal solution}
\paragraph{Existence of the minimal solution}
It is clear by considering the scalar case ($n=1$) that \eqref{qve} may have no real solutions. The following additional condition allows us to prove their existence. We call a linear map $l:\R^n\to \R^m$ \emph{weakly positive} if $l(x)\geq 0$, $l(x)\neq 0$ whenever $x\geq 0$, $x\neq 0$.
\begin{description}
 \item [Condition A1] There are a weakly positive map $l:\R^n \to \R^m$ and a vector $z \in \R^m$, $z\geq 0$ such that for any $x \geq 0$, it holds that $l(x) \leq z$ implies $l(M^{-1}(a+b(x,x))) \leq z$.
\end{description}
We first prove a lemma on weakly positive maps, and then our existence result.

\begin{lemma}\label{wpl}
Let $l:\R^n \to \R^m$ be weakly positive. For each $z\in\R^m$ such that $z\geq 0$ the set $\{x \in \R^n: x\geq 0,\, l(x)\leq z\}$ is bounded.
\end{lemma}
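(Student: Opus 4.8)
The plan is to prove boundedness directly in the $1$-norm, exploiting the weak positivity of $l$ together with the sign constraint $x\ge 0$. First I would fix the standard basis $e_1,\dots,e_n$ of $\R^n$ and set $c_i:=l(e_i)\in\R^m$ for $i=1,\dots,n$. Since each $e_i\ge 0$ and $e_i\neq 0$, weak positivity gives $c_i\ge 0$ and $c_i\neq 0$, hence $\sigma_i:=\sum_{j=1}^m (c_i)_j>0$; put $\gamma:=\min_{1\le i\le n}\sigma_i>0$ and $\zeta:=\sum_{j=1}^m z_j\ge 0$.

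Next, take any $x$ in the set, so $x\ge 0$ and $l(x)\le z$. Writing $x=\sum_{i=1}^n x_ie_i$ with $x_i\ge 0$ and using linearity of $l$, we get $l(x)=\sum_{i=1}^n x_ic_i$, a vector with nonnegative entries which is bounded above entrywise by $z$. Summing all $m$ entries of the inequality $l(x)\le z$ and interchanging the two finite sums yields $\sum_{i=1}^n \sigma_i x_i=\sum_{j=1}^m (l(x))_j\le \zeta$. Since $\sigma_i x_i\ge \gamma x_i\ge 0$, this gives $\gamma\norm{x}_1=\gamma\sum_{i=1}^n x_i\le\zeta$, so the set is contained in the $1$-norm ball of radius $\zeta/\gamma$, hence bounded.

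I do not expect a genuine obstacle here; the only point requiring care is that weak positivity must be invoked precisely to guarantee $c_i\neq 0$ for every $i$ — if some $c_i$ vanished, the set would be unbounded in the $e_i$ direction, so this is where the hypothesis is essential (nonnegativity of $l$ on the orthant alone would not suffice). For completeness one could instead argue by contradiction and compactness: an unbounded set would provide $x_k\ge 0$ with $l(x_k)\le z$ and $\norm{x_k}\to\infty$; the normalized vectors $y_k:=x_k/\norm{x_k}$ lie in the compact set $\{y\ge 0:\norm{y}=1\}$, so a subsequence converges to some $y\ge 0$, $y\neq 0$, and then $0\le l(y)=\lim l(x_k)/\norm{x_k}\le\lim z/\norm{x_k}=0$ contradicts weak positivity. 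I would keep the first proof, since it is self-contained and gives an explicit bound.
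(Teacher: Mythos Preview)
Your proof is correct and follows essentially the same approach as the paper: both apply weak positivity to the basis vectors $e_i$ to obtain $l(e_i)\ge 0$, $l(e_i)\neq 0$, and then use the nonnegativity of $x$ (hence monotonicity of $l$ on the orthant) to bound the coordinates. The only cosmetic difference is that the paper bounds each $x_i$ separately by selecting one positive component of $l(e_i)$, whereas you sum all components to obtain a global $1$-norm bound; the compactness alternative you sketch is also valid but not used in the paper.
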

\begin{proof}
 For each $i=1,2,\dots,n$, let $e_i$ denote the $i$-th vector of the canonical basis. The vector $y=l(e_1)$ has at least a nonzero component, let it be $y_j>0$. Then, for each $x\geq 0$ such that $l(x)\leq z$, we must have $x_1\leq \frac{z_j}{y_j}$, otherwise
\[
 \left(l(x)\right)_j> \left(l\left(\frac{z_j}{y_j}e_1\right)\right)_j \geq z_j,
\]
which contradicts $l(x)\leq z$. We may repeat the argument starting from any $e_k$, $k=2,3,\dots,n$ in place of $e_1$, obtaining a corresponding bound for each of the other entries of $x$.
\end{proof}
\begin{theorem}\label{th:minsol}
 Equation \eqref{qve} has at least one solution if and only if A1 holds. Among its solutions, there is a minimal one.
\end{theorem}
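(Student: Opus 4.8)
The plan is to prove the two directions separately and then, in the "if" direction, to construct the minimal solution explicitly as the limit of a monotone functional iteration.

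For the \emph{only if} direction, suppose $x_\ast\geq 0$ solves \eqref{qve}. I would like to exhibit a weakly positive map $l$ and a vector $z$ as in Condition~A1. The natural guess is to take $l=\operatorname{id}$ (the identity on $\R^n$, which is trivially weakly positive) and $z=x_\ast$. I must then check that $x\geq 0$ and $x\leq x_\ast$ imply $M^{-1}(a+b(x,x))\leq x_\ast$. Since $M$ is a nonsingular $M$-matrix we have $M^{-1}\geq 0$ (Theorem~\ref{Mmatrices}, item~3), and monotonicity of $b$ (noted after \eqref{qve}) gives $b(x,x)\leq b(x_\ast,x_\ast)$; hence $M^{-1}(a+b(x,x))\leq M^{-1}(a+b(x_\ast,x_\ast))=x_\ast$, using that $x_\ast$ is a solution. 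So A1 holds with this choice.

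For the \emph{if} direction, define the functional iteration $x_0=0$ and $x_{k+1}=M^{-1}\bigl(a+b(x_k,x_k)\bigr)$. First, an induction shows $0=x_0\leq x_1$ (since $a,M^{-1}\geq 0$) and, using monotonicity of $b$ and nonnegativity of $M^{-1}$, that $x_k\leq x_{k+1}$ for all $k$: the sequence is nondecreasing. Next I show it is bounded above. Let $l,z$ be as given by A1. I claim $l(x_k)\leq z$ for all $k$ by induction: $l(x_0)=l(0)=0\leq z$, and if $l(x_k)\leq z$ then A1 (applied to $x=x_k\geq 0$) gives exactly $l(x_{k+1})=l\bigl(M^{-1}(a+b(x_k,x_k))\bigr)\leq z$. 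Therefore every $x_k$ lies in the set $\{x\geq 0: l(x)\leq z\}$, which is bounded by Lemma~\ref{wpl}. A nondecreasing sequence in $\R^n$ that is bounded above converges; let $x_\ast=\lim_k x_k$. Passing to the limit in the iteration (the right-hand side is continuous, being affine-plus-bilinear composed with the fixed linear map $M^{-1}$) gives $x_\ast=M^{-1}(a+b(x_\ast,x_\ast))$, i.e.\ $Mx_\ast=a+b(x_\ast,x_\ast)$, and $x_\ast\geq 0$ since each $x_k\geq 0$. So \eqref{qve} has a solution.

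Finally, for minimality: let $y\geq 0$ be any solution of \eqref{qve}. I show $x_k\leq y$ for all $k$ by induction. Clearly $x_0=0\leq y$. If $x_k\leq y$, then by monotonicity of $b$ and $M^{-1}\geq 0$,
\[
 x_{k+1}=M^{-1}\bigl(a+b(x_k,x_k)\bigr)\leq M^{-1}\bigl(a+b(y,y)\bigr)=y.
\]
Letting $k\to\infty$ yields $x_\ast\leq y$, so the solution $x_\ast$ constructed above is minimal, and in particular there is a minimal solution among all solutions. The main subtlety, and the only place Condition~A1 and the weak positivity hypothesis genuinely enter, is the boundedness step: without A1 the monotone iteration could diverge (as already the scalar case shows), and it is precisely Lemma~\ref{wpl} that converts the componentwise inequality $l(x_k)\leq z$ into an honest bound on $x_k$ itself. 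Everything else is routine monotonicity bookkeeping using $M^{-1}\geq 0$ and the monotonicity of $b$.
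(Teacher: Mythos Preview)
Your proof is correct and follows essentially the same approach as the paper's: the same monotone iteration $x_{k+1}=M^{-1}(a+b(x_k,x_k))$ from $x_0=0$, boundedness via A1 and Lemma~\ref{wpl}, the choice $l=\operatorname{id}$, $z=x_\ast$ for the converse, and the same induction $x_k\leq y$ for minimality. The only cosmetic difference is the order in which you present the two directions.
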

\begin{proof}
Let us consider the iteration
\begin{equation}\label{fp1}
 x_{k+1}=M^{-1}\left(a+b(x_k,x_k)\right),
\end{equation}
starting from $x_0=0$. Since $M$ is a nonsingular M-matrix, we have $x_1=M^{-1}a \geq 0$. It is easy to see by induction that $x_k \leq x_{k+1}$:
\[
 x_{k+1}-x_k = M^{-1}(b(x_k,x_k)-b(x_{k-1},x_{k-1})) \geq 0
\]
since $b$ is nonnegative. We now prove by induction that $l(x_k) \leq z$. The base step is clear: $l(0)=0 \leq z$; the inductive step is simply A1. The sequence $x_k$ is nondecreasing and bounded by Lemma~\ref{wpl}; hence it converges. Its limit $x_\ast$ is a solution to \eqref{qve}.

On the other hand, if \eqref{qve} has a solution $s$, then we may choose $l=I$ and $z=s$; now, $0 \leq x \leq s$ implies $M^{-1}(a+b(x,x)) \leq M^{-1}(a+b(s,s))=s$, thus A1 is satisfied with these choices.

For any solution $s$, we may prove by induction that $x_k \leq s$:
\[
 s-x_{k+1} = M^{-1}(a+b(s,s)-a-b(x_k,x_k)) \geq 0.
\]
Therefore, passing to the limit, $x_\ast \leq s$.
\end{proof}

\paragraph{Taylor expansion}
Let $F(x):=Mx-a-b(x,x)$. Since the equation is quadratic, the following expansion holds.
\begin{equation}\label{taylor}
 F(y)=F(x)+F'_x(y-x)+\frac 12 F''_x(y-x,y-x),
\end{equation}
where $F'_x(w)=Mw-b(x,w)-b(w,x)$ is the (Fr\'echet) derivative of $F$ and $F''_x(w,w)=-2b(w,w) \leq 0$ is its second (Fr\'echet) derivative. Notice that $F''_x$ is nonpositive and does not depend on $x$.

\paragraph{Concrete cases}
We may prove A1 for all the examples E1--E4. E1 is covered by the following observation. 
\begin{lemma}\label{soprasol}
If there is a vector $y \geq 0$  such that $F(y) \geq 0$, then A1 holds, and $x_\ast\leq y$.
\end{lemma}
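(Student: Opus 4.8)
The plan is to verify Condition~A1 directly with the choices $l=I$ and $z=y$, and then to re-run the monotone iteration of the proof of Theorem~\ref{th:minsol} with $y$ in the role of a bounding vector.

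First, since $M$ is a nonsingular $M$-matrix, Theorem~\ref{Mmatrices} gives $M^{-1}\geq 0$. Unfolding the hypothesis $F(y)\geq 0$ yields $My\geq a+b(y,y)$, and multiplying on the left by $M^{-1}\geq 0$ gives the \emph{supersolution} inequality $y\geq M^{-1}(a+b(y,y))$. Take $l$ to be the identity map (which is trivially weakly positive) and $z=y$. For any $x\geq 0$ with $l(x)=x\leq y$, the monotonicity of $b$ (namely $0\leq x\leq y$ implies $b(x,x)\leq b(y,y)$) together with $M^{-1}\geq 0$ gives
\[
 l\bigl(M^{-1}(a+b(x,x))\bigr)=M^{-1}(a+b(x,x))\leq M^{-1}(a+b(y,y))\leq y=z,
\]
which is precisely Condition~A1. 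By Theorem~\ref{th:minsol}, \eqref{qve} then has a minimal solution $x_\ast$, obtained as the limit of the iteration \eqref{fp1} started from $x_0=0$.

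To obtain $x_\ast\leq y$, I would repeat the final induction of the proof of Theorem~\ref{th:minsol}, bounding by $y$ instead of by a solution. With $x_0=0\leq y$ and $x_{k+1}=M^{-1}(a+b(x_k,x_k))$, if $x_k\leq y$ then
\[
 y-x_{k+1}\geq M^{-1}(a+b(y,y))-M^{-1}(a+b(x_k,x_k))=M^{-1}\bigl(b(y,y)-b(x_k,x_k)\bigr)\geq 0,
\]
so $x_k\leq y$ for all $k$; passing to the limit, $x_\ast\leq y$. Note that only the supersolution property $F(y)\geq 0$ is used here, not that $y$ solves \eqref{qve}.

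There is essentially no obstacle: the statement is a short verification. The only points requiring a little care are invoking $M^{-1}\geq 0$ (valid because $M$ is a nonsingular $M$-matrix) and the monotonicity of the bilinear form, after which everything is a verbatim reuse of the iteration analysed in Theorem~\ref{th:minsol}.
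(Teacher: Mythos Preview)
Your proof is correct and follows essentially the same approach as the paper: both take $l=I$, $z=y$, use $M^{-1}\geq 0$ together with monotonicity of $b$ to verify A1, and then bound the iterates $x_k$ by $y$ via the same induction. Your write-up is simply more detailed than the paper's terse version.
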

\begin{proof} In fact, we may take the identity map as $l$ and $y$ as $z$. Clearly $0 \leq x \leq y$ implies $M^{-1}(a+b(x,x)) \leq M^{-1}(a+b(y,y)) \leq y$. It is easy to prove by induction that $x_k\leq y$. \end{proof}

As for E2, it follows from the reasoning in \cite{Lu02} that a solution to the specific problem is $u=Xq+e$, $v=X^Tq+e$, where $X$ is the solution of an equation of the form E3; therefore, E2 follows from E3 and Lemma~\ref{soprasol}. An explicit but rather complicate bound to the solution is given in \cite{Juang}.

The case E3 is treated in \cite[Theorem 3.1]{GuoNAREandWienerHopf}. Since $\mathcal M$ in \eqref{nareM} is a nonsingular or singular irreducible M-matrix, there are vectors $v_1,v_2>0$ and $u_1,u_2\geq 0$ such that $Dv_1-Cv_2=u_1$ and $Av_2-Bv_1=u_2$. Let us set $l(x)=X v_1$ and $z=v_2-A^{-1}u_2$.
We have
\[
\begin{aligned}
(AX_{k+1}+X_{k+1}D)v_1 =& (X_kCX_k+B)v_1\\
 \leq&X_kCv_2+Av_2-u_2 \leq X_kDv_1+Av_2-u_2.
\end{aligned}
\]
Since $X_{k+1}Dv_1 \geq X_kDv_1$ (monotonicity of the iteration), we get $X_{k+1}v_1 \leq v_2-A^{-1}u_2$, which is the desired result.

The case E4 is similar. It suffices to set $l(x)=\vettop^{-1}(x)e$ and $z=e$:
\[
 X_{k+1}e=(I-B)^{-1}(A+CX_k^2)e \leq (I-B)^{-1}(Ae+Ce) \leq e,
\]
since $(A+C)e=(I-B)e$

\section{Derivative at the minimal solution}
In order to obtain a cleaner induction proof, we state the next result for a class of iterations slightly more general than the one that we actually need.
\begin{theorem}\label{th:specray}
 Let $x_k$ be the sequence generated by a fixed-point iteration of the form
\begin{align*}
 x_{k+1}=&F(x_k)=a+Px_k+b(x_k,x_k),
\end{align*}
with $a\geq 0, P \geq 0, b\geq 0$, and suppose $x_k$ converges monotonically to  $x_\ast>x_0$. Then $\rho(F'_{x_\ast})\leq 1$, where
\[
 F'_x:=P+b(x,\cdot)+b(\cdot,x)
\]
is the Fr\'echet derivative of the iteration map.
\end{theorem}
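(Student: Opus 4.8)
The plan is to pass from $x_k$ to the residuals $r_k:=x_\ast-x_k$. Since the iteration is monotone and its limit satisfies $x_\ast>x_0$, the sequence $x_k$ must be nondecreasing, so $0\le x_k\le x_\ast$, the vectors $r_k$ form a nonincreasing sequence of nonnegative vectors, $r_0=x_\ast-x_0>0$, and $r_k\to 0$. Because $F$ is quadratic one has the exact identity $F(y)-F(x)=F'_x(y-x)+b(y-x,y-x)$; with $y=x_\ast$ and $x=x_k$ this gives $r_{k+1}=F'_{x_k}(r_k)+b(r_k,r_k)\ge F'_{x_k}(r_k)$. Moreover $F'_{x'}-F'_{x}=b(x'-x,\cdot)+b(\cdot,x'-x)$ is a nonnegative matrix whenever $x'\ge x$, so $F'_{x_j}\le F'_{x_k}$ entrywise for $j\le k$; since all these matrices are nonnegative, iterating the previous inequality yields $0\le (F'_{x_k})^m r_k\le r_{k+m}$ for all $k,m$, and hence $(F'_{x_k})^m r_k\to 0$ as $m\to\infty$.

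Next I would isolate an elementary fact: if $B\ge 0$ and $B^m v\to 0$ for some strictly positive $v$, then $B^m\to 0$, so $\rho(B)<1$. Indeed any vector $u$ satisfies $|u|\le c\,v$ entrywise for a suitable scalar $c$, whence $|B^m u|\le B^m|u|\le c\,B^m v\to 0$. If the $r_k$ happen to be strictly positive for every $k$ — the typical situation, e.g. when convergence is not attained in finitely many steps on any coordinate — this fact applied with $B=F'_{x_k}$ and $v=r_k$ gives $\rho(F'_{x_k})<1$ for all $k$; since $F'_{x_k}\to F'_{x_\ast}$ entrywise and the spectral radius depends continuously on the matrix, $\rho(F'_{x_\ast})=\lim_k\rho(F'_{x_k})\le 1$, which is the claim.

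The delicate point is the degenerate situation in which some coordinate converges in finitely many steps, i.e. $r_{k,i}=0$ from some step on. Let $Z$ be the set of such coordinates and $S$ its complement; by monotonicity there is a $K_0$ with $r_k|_Z=0$ and $r_k|_S>0$ for all $k\ge K_0$. For $i\in Z$ and $k\ge K_0$ the relation $0=r_{k+1,i}=(F'_{x_k}r_k)_i+b(r_k,r_k)_i$ is a sum of nonnegative terms, so each of them vanishes; unpacking this (using $r_k|_S>0$ and, for large $k$, $x_k>0$) forces the entries of $P$ and of the tensor of $b$ that would let a coordinate in $S$ influence a coordinate in $Z$ to be zero, so that $F'_{x_\ast}$ is block upper triangular for the partition $(S,Z)$ and $\rho(F'_{x_\ast})=\max\bigl(\rho((F'_{x_\ast})_{SS}),\rho((F'_{x_\ast})_{ZZ})\bigr)$. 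Each diagonal block is, in turn, the derivative at its limit of a fixed-point iteration of exactly the same form $a'+P'x+b'(x,x)$ acting on strictly fewer unknowns — obtained by restricting the data, or by freezing the other coordinates at their limit value, which is precisely why the statement is phrased for the class with the extra linear term $Px$ — and this subiteration still converges monotonically to a limit that is strictly larger than its starting point, because $r_0>0$. An induction on the number of unknowns then applies (with $(F'_{x_\ast})_{SS}$ also directly amenable to the argument of the second paragraph, since $r_{k+1}|_S\ge(F'_{x_k})_{SS}(r_k|_S)$ with $r_{K_0}|_S>0$); the base case $n=1$ and the boundary case $Z=\{1,\dots,n\}$ — where one argues instead with the displacement $q=x_\ast-x_{K_0-1}$, showing that the columns of $F'_{x_\ast}$ supported on the support of $q$ vanish and recursing on the complement — are routine. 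I expect this bookkeeping around finite-step convergence, and the reducibility of $F'_{x_\ast}$ it produces, to be the only real obstacle; the estimate and the elementary fact in the first two paragraphs are the substance of the proof.
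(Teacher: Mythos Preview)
Your argument is correct and shares the paper's overall architecture --- a direct estimate in the generic case, followed by an induction on $n$ once the error vector develops zero entries --- but both pieces are executed differently.

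For the generic case you work with the \emph{inequality} $r_{k+1}\ge F'_{x_k}r_k$ and the elementary fact that $B^m v\to 0$ with $v>0$ forces $\rho(B)<1$; this yields $\rho(F'_{x_k})<1$ for every $k$, and continuity of the spectral radius finishes. The paper instead uses the \emph{exact} recursion $e_{k+1}=P_k e_k$ with $P_k=P+b(x_\ast,\cdot)+b(\cdot,x_k)$ and shows, via a Krasnosel'ski\u{\i}-type comparison, that $\rho(F'_{x_\ast})\le \limsup\|e_k\|^{1/k}\le 1$. Your route is more self-contained; the paper's route yields, as a byproduct, that the asymptotic convergence rate of the iteration equals $\rho(F'_{x_\ast})$.

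For the degenerate case the paper partitions according to the zero pattern of $e_1=P_0e_0$: because $e_0>0$, the entire second block \emph{row} of $P_0$ (hence of $P$, $b(x_\ast,\cdot)$ and $b(\cdot,x_\ast)$) vanishes, so $\rho(F'_{x_\ast})$ coincides with the spectral radius of a single smaller block, which is the derivative of an honest sub-iteration of the same form --- and one recurses once. Your partition (all coordinates that reach $x_\ast$ in finitely many steps) forces only the off-diagonal $(Z,S)$ block to vanish, leaving two diagonal blocks; you then also need the separate ``boundary'' case $Z=\{1,\dots,n\}$, with its own column-vanishing argument and a frozen sub-iteration on $Q^c$. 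All of this is sound, but the step you label ``routine'' --- checking that the frozen iteration on $Q^c$ is again monotone and converges to $(x_\ast)_{Q^c}$ from strictly below --- is precisely where the work sits; the paper's one-step partition avoids that extra bookkeeping.
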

\begin{proof}
 Let $e_k:=x_\ast-x_k$; we have $e_{k+1}=P_k e_k$, where
\[
 P_k:=P+b(x_\ast,\cdot)+b(\cdot,x_k).
\]
It is a classical result \cite{Krasno} that
\begin{equation}\label{specray}
 \lim \sup \sqrt[k]{\norm{x_\ast-x_k}} \leq \rho(F'_{x_\ast});
\end{equation}
we first prove that equality holds when $P_0 e_0>0$, following the argument in \cite[Theorem~3.2]{GuoLaub}. Since $P_k$ converges monotonically to $F'_{x_k}$, for any $\varepsilon>0$ we may find an integer $\ell$ such that 
\[
 \rho(P_m) \geq \rho(F'_{x_\ast})-\varepsilon, \quad \forall m \geq \ell.
\]
We have 
\[
\begin{aligned}
 \lim\sup\sqrt[k]{\norm{x_\ast-x_k}} =& \lim \sup \sqrt[k]{\norm{P_{k-1}\dots P_l \dots P_0 e_0}}\\
 \geq&\lim \sup \sqrt[k]{\norm{P_l^{k-l} P_0^l e_0}}.\\
\end{aligned}
\]
Since $P_0 e_0 > 0$, $P_0^l e_0 > C_l e$ for a suitable constant $C_l$. Also, $\norm {P_l^{k-l}}=\norm{P_l^{k-l} v_{k,l}}$ for a suitable $v_{k,l} \geq 0$ with $\norm{v_{k,l}}=1$, so
\[
 \begin{aligned}
 \lim\sup\sqrt[k]{\norm{x_\ast-x_k}} \geq& \lim \sup \sqrt[k]{C_l \norm{P_l^{k-l}e}}\\
 \geq &  \lim \sup \sqrt[k]{C_l \norm{P_l^{k-l}v_{k,l}}}\\
 = & \lim \sup \sqrt[k]{C_l \norm{P_l^{k-l}}}\\
  = & \rho(P_l) \geq \rho(\mathcal G_{x_\ast})-\varepsilon.
\end{aligned}
\]
Since $\varepsilon$ is arbitrary, this shows that equality holds in \eqref{specray}.

The case in which $P_0 e_0$ has some zero entries needs additional considerations. We prove the result by induction on the dimension $n$ of the problem. For $n=1$, either $P_0e_0>0$, and thus the proof above holds, or we are in the trivial case $b(u,v)\equiv 0$. Let us prove the general result in the case in which $P_0 e_0$ has some zero entries.
Suppose that (up to a permutation of the entries)
\begin{align*}
 P_0 e_0=&\vett{t}{0}, & t>0.
\end{align*}
Partition conformably
\begin{align*}
 a=&\vett{a_1}{a_2}, & x_\ast=&\vett{x_{\ast1}}{x_{\ast2}}.
\end{align*}
As $e_1=P_0e_0$ is the error $x_\ast-x_1$, the second block row of $x_k$ needs only one iteration to converge, i.e., for all $k\geq 1$,
\[
 x_k=\vett{t_k}{x_{\ast2}}
\]
for a suitable sequence $t_k$. Moreover, as $P_0\geq 0$ and $e_0>0$, for $P_0e_0$ to have null components we need a special zero structure in $P$ and $b$, namely
\begin{align*}
 P=&\twotwo{P_{1,1}}{P_{1,2}}{0}{0},& b(x_\ast,\cdot)=&\twotwo{B_{1,1}}{B_{1,2}}{0}{0}, & 
b(\cdot,x_\ast)=&\twotwo{C_{1,1}}{C_{1,2}}{0}{0}.
\end{align*}
Therefore, 
\begin{align*}
 F'_{x_\ast}=\twotwo{P_{1,1}+B_{1,1}+C_{1,1}}{P_{1,2}+B_{1,2}+C_{1,2}}{0}{0},
\end{align*}
and our thesis is equivalent to $\rho(P_{1,1}+B_{1,1}+C_{1,1})\leq 1$. The sequence $t_k$, for $k\geq 1$, converges monotonically to $t_\ast=x_{\ast1}>t_1$ and is generated by the fixed-point iteration
\[
 t_{k+1}=a_1+P_{1,1}t_k+P_{1,2}x_{\ast2}+\begin{bmatrix}I & 0\end{bmatrix}b\left(\vett{t_k}{x_{\ast2}},\vett{t_k}{x_{\ast2}}\right)
\]
whose Fr\'echet derivative at the limit point $t_\ast=x_{\ast1}$ is precisely $P_{1,1}+B_{1,1}+C_{1,1}$. Therefore our claim holds by the inductive hypothesis.
\end{proof}
\begin{corollary}
 By applying the theorem above to the fixed-point iteration \eqref{fp1}, we obtain that for a QVE with $x_\ast>0$ $\rho(M^{-1}(b(x_\ast,\cdot)+b(\cdot,x_\ast))\leq 1$ and so $F'_{x_\ast}$ is an $M$-matrix.
\end{corollary}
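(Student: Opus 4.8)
The plan is to observe that the functional iteration \eqref{fp1} is a particular case of the iteration treated in Theorem~\ref{th:specray}, and then to convert the resulting bound $\rho(F'_{x_\ast})\leq 1$ into the M-matrix statement by means of item~\ref{Mmatrices5} of Theorem~\ref{Mmatrices}.

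First I would rewrite \eqref{fp1} as $x_{k+1}=\hat a+\hat P x_k+\hat b(x_k,x_k)$ with $\hat a:=M^{-1}a$, $\hat P:=0$, and $\hat b:=M^{-1}b$. Since $M$ is a nonsingular M-matrix, the third item of Theorem~\ref{Mmatrices} gives $M^{-1}\geq 0$, hence $\hat a\geq 0$ and $\hat b\geq 0$, so the nonnegativity hypotheses of Theorem~\ref{th:specray} are satisfied. By the proof of Theorem~\ref{th:minsol}, the iterates produced by \eqref{fp1} from $x_0=0$ form a nondecreasing sequence converging to the minimal solution, and the standing assumption $x_\ast>0$ is exactly the requirement $x_\ast>x_0$. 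Theorem~\ref{th:specray} then yields $\rho(\hat F'_{x_\ast})\leq 1$, where $\hat F'_{x_\ast}=\hat P+\hat b(x_\ast,\cdot)+\hat b(\cdot,x_\ast)=M^{-1}\bigl(b(x_\ast,\cdot)+b(\cdot,x_\ast)\bigr)$; this is the first assertion.

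For the second assertion, set $P:=b(x_\ast,\cdot)+b(\cdot,x_\ast)$, which is a nonnegative matrix by the bilinearity and nonnegativity of $b$, so that $F'_{x_\ast}=M-P$. Every M-matrix is a $Z$-matrix, and subtracting the nonnegative matrix $P$ leaves all off-diagonal entries nonpositive, so $F'_{x_\ast}$ is a $Z$-matrix; moreover $M-P$ is a splitting with $M$ a nonsingular M-matrix, $P\geq 0$, and $\rho(M^{-1}P)\leq 1$ by the first part. Item~\ref{Mmatrices5} of Theorem~\ref{Mmatrices} then gives that $F'_{x_\ast}$ is an M-matrix.

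No step here is genuinely hard: the corollary is essentially a translation of the already-established Theorem~\ref{th:specray}. The only points demanding care are checking that the hypotheses of that theorem hold verbatim for \eqref{fp1} (in particular that the choice $\hat P=0$ is admissible and that $\hat a,\hat b\geq 0$, which rests on $M^{-1}\geq 0$), and recognizing that the hypothesis ``$x_\ast>0$'' in the corollary is precisely the strict inequality $x_\ast>x_0$ with $x_0=0$ that Theorem~\ref{th:specray} needs; this is also where the role of the strict positivity of the minimal solution becomes apparent.
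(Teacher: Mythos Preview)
Your argument is correct and matches the paper's intended reasoning: the paper states the corollary without proof, treating both conclusions as immediate from Theorem~\ref{th:specray} applied to \eqref{fp1} together with item~\ref{Mmatrices5} of Theorem~\ref{Mmatrices}, exactly as you spell out. Your care in distinguishing the iteration-map derivative $\hat F'_{x_\ast}$ from the Jacobian $F'_{x_\ast}$ of $F(x)=Mx-a-b(x,x)$ is a helpful clarification of what the paper leaves implicit.
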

\begin{corollary}\label{belownonsing}
 If $x_\ast>0$ and $F'_{x_\ast}$ is irreducible or nonsingular, then by \autoref{twoconditions} $F'_{x}$ is a nonsingular M-matrix for all $0 \leq x\leq x_\ast$, $x\neq x_\ast$.
\end{corollary}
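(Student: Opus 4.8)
The strategy is to reduce the statement to a monotone comparison between $F'_x$ and $F'_{x_\ast}$, followed by an application of \autoref{twoconditions}. Since the previous corollary already gives that $F'_{x_\ast}$ is an $M$-matrix, the plan is: (i) show $F'_x \geq F'_{x_\ast}$ entrywise and that $F'_x$ is a $Z$-matrix; (ii) deduce via item~\ref{Mmatrices2} of \autoref{Mmatrices} that $F'_x$ is an $M$-matrix; (iii) upgrade to nonsingularity using \autoref{twoconditions} on the pair $F'_{x_\ast} \leq F'_x$.

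For (i), recall that as a matrix $F'_x = M - b(x,\cdot) - b(\cdot,x)$, where $b(x,\cdot)$ denotes the matrix of the linear map $w \mapsto b(x,w)$ and $b(\cdot,x)$ that of $w \mapsto b(w,x)$. Evaluating these maps on the canonical basis vectors $e_j$ and using the monotonicity of $b$, from $0 \leq x \leq x_\ast$ we obtain $b(x,e_j) \leq b(x_\ast,e_j)$ and $b(e_j,x) \leq b(e_j,x_\ast)$ for every $j$; hence the nonnegative matrix of $b(x,\cdot)+b(\cdot,x)$ is dominated entrywise by that of $b(x_\ast,\cdot)+b(\cdot,x_\ast)$, and subtracting the smaller nonnegative matrix from $M$ yields $F'_x \geq F'_{x_\ast}$. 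Moreover $F'_x$ equals $M$ minus a nonnegative matrix, so its off-diagonal entries stay nonpositive and $F'_x$ is a $Z$-matrix. Step (ii) is then a direct citation of item~\ref{Mmatrices2} of \autoref{Mmatrices}, applied with $F'_x$ in the role of $Z$ and $F'_{x_\ast}$ in the role of the $M$-matrix.

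For (iii) I apply \autoref{twoconditions} to the same pair: its hypotheses (a $Z$-matrix $F'_x$ dominating the $M$-matrix $F'_{x_\ast}$) are exactly those just verified. If $F'_{x_\ast}$ is nonsingular, the first additional condition of \autoref{twoconditions} holds and $F'_x$ is nonsingular; if $F'_{x_\ast}$ is irreducible, the second condition gives the same conclusion provided $F'_x \neq F'_{x_\ast}$. In either case, together with (ii), $F'_x$ is a nonsingular $M$-matrix.

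The one point requiring care — and the main obstacle — is establishing $F'_x \neq F'_{x_\ast}$ in the irreducible case, since there, unlike in the nonsingular case, \autoref{twoconditions} cannot be invoked unconditionally. By the computation in (i), $F'_x = F'_{x_\ast}$ is equivalent to the nonnegative matrix of $b(x_\ast - x,\cdot) + b(\cdot,x_\ast - x)$ vanishing, i.e. to $b$ annihilating the nonzero nonnegative vector $x_\ast - x$ from both sides; ruling this out requires a (mild) non-degeneracy of $b$ relative to $x_\ast$, and it is automatic, for instance, when $x_\ast - x$ is entrywise positive. Everything else is routine monotonicity bookkeeping.
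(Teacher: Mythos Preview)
Your approach is exactly the one the paper intends: the corollary carries no separate proof and is stated as an immediate consequence of \autoref{twoconditions}, applied to the pair $F'_{x_\ast}\leq F'_x$. Steps (i)--(iii) of your write-up spell out precisely the monotonicity and $Z$-matrix checks that the paper leaves implicit.

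Your final paragraph, however, is not excess caution but a genuine observation: the requirement $F'_x\neq F'_{x_\ast}$ in the irreducible--singular branch of \autoref{twoconditions} is \emph{not} automatic from the stated hypotheses, and the paper does not verify it either. Whenever $b(x_\ast-x,\cdot)+b(\cdot,x_\ast-x)$ vanishes as a linear map---which happens if some coordinate of $x_\ast$ does not feed into $b$ at all---one gets $F'_x=F'_{x_\ast}$ and the conclusion can fail. A concrete $2\times 2$ instance: take $M=\left[\begin{smallmatrix}2&-1\\-1&2\end{smallmatrix}\right]$, $a=(\tfrac14,1)^T$, $b(x,y)=(\tfrac34\,x_1y_1,\,0)^T$. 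Then $x_\ast=(1,1)^T>0$ is the unique (hence minimal) solution, and $F'_{x_\ast}=\left[\begin{smallmatrix}1/2&-1\\-1&2\end{smallmatrix}\right]$ is irreducible and singular; yet for $x=(1,0)^T$, which satisfies $0\leq x\leq x_\ast$ and $x\neq x_\ast$, one has $F'_x=F'_{x_\ast}$, still singular. So the ``mild non-degeneracy'' you flag is a real additional hypothesis, not something that can be extracted from the existing ones; you have located an actual gap in the corollary as stated.
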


\paragraph{Concrete cases}
For E1, only the nonsingular case is of practical interest, thus the results are easier to prove. A strategy to reduce a problem with reducible $F'_{x_\ast}$ to two smaller ones is presented in \cite{williamsburg}. Positivity of the solution and irreducibility are clear for E2 due to the form of the problem. Positivity of the solution has been proved for E3 in \cite{refereeG01} in the case when $M$ is irreducible. Earlier versions of the results appearing in this paper \cite{mythesis,thisoldarxiv} contained an incorrect proof which failed to consider possible zero entries in $P_0e_0$.

\section{Functional iterations}
\subsection{Definition and convergence}
We may define a functional iteration for \eqref{qve} by choosing a splitting $b=b_1+b_2$ such that $b_i \geq 0$ and a splitting $M=N-P$ such that $N$ is an $M$-matrix and $P \geq 0$. We then have the iteration
\begin{equation}\label{funit}
 (N-b_1(\cdot,x_k))x_{k+1}=a+Px_k+b_2(x_k,x_k).
\end{equation}
\begin{theorem}
Suppose that the condition in Corollary~\ref{belownonsing} holds. Let $x_0$ be such that $0 \leq x_0 \leq x_\ast$ and $F(x_0) \leq 0$ (e.g. $x_0=0$). Then:
\begin{enumerate}
 \item $N-b_1(\cdot,x_k)$ is nonsingular for all $k$, i.e., the iteration \eqref{funit} is well-defined.
 \item $x_k\leq x_{k+1}\leq x_\ast$, and $x_k \to x_\ast$ as $k \to \infty$.
 \item $F(x_k)\leq 0$ for all $k$.
\end{enumerate}
\end{theorem}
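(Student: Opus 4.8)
The plan is a single induction on $k$ that establishes all three claims at once, together with the monotonicity $x_k\le x_{k+1}$. The base case holds by the hypotheses on $x_0$ (and for $x_0=0$ one has $F(0)=-a\le 0$). So assume $x_k$ has been produced with $0\le x_k\le x_\ast$ and $F(x_k)\le 0$. The decisive point is the well-definedness of $x_{k+1}$. Substituting $M=N-P$ into $F'_{x_k}=M-b(x_k,\cdot)-b(\cdot,x_k)$ and splitting $b=b_1+b_2$ gives the identity
\[
 N-b_1(\cdot,x_k)=F'_{x_k}+P+b(x_k,\cdot)+b_2(\cdot,x_k),
\]
so that $N-b_1(\cdot,x_k)\ge F'_{x_k}$, since every term added to $F'_{x_k}$ is represented by a nonnegative matrix when $x_k\ge 0$. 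Now $N-b_1(\cdot,x_k)$ is a $Z$-matrix, and $F'_{x_k}$ is a nonsingular $M$-matrix by \autoref{belownonsing} (when $x_k\ne x_\ast$; the marginal case $x_k=x_\ast$ is treated the same way, since $F'_{x_\ast}$ is itself an $M$-matrix by the corollary following \autoref{th:specray} and $N-b_1(\cdot,x_\ast)$ is a $Z$-matrix dominating it). Hence, by item~\ref{Mmatrices2} of \autoref{Mmatrices}, $N-b_1(\cdot,x_k)$ is an $M$-matrix, and by \autoref{twoconditions} it is nonsingular, so $(N-b_1(\cdot,x_k))^{-1}\ge 0$ and $x_{k+1}$ is well-defined and nonnegative.

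The monotonicity and the upper bound then come from two short computations in which $N-b_1(\cdot,x_k)$ is kept on the left. Subtracting $(N-b_1(\cdot,x_k))x_k$ from \eqref{funit} and using $Mx_k=Nx_k-Px_k$ gives $(N-b_1(\cdot,x_k))(x_{k+1}-x_k)=-F(x_k)\ge 0$, hence $x_{k+1}\ge x_k$. Writing the equation for $x_\ast$ as $Nx_\ast=a+Px_\ast+b_1(x_\ast,x_\ast)+b_2(x_\ast,x_\ast)$ and using $b_1(x_\ast,x_\ast)-b_1(x_k,x_\ast)=b_1(x_\ast-x_k,x_\ast)\ge 0$ together with the monotonicity of $b_2$, subtraction from $(N-b_1(\cdot,x_k))x_\ast$ yields
\[
 (N-b_1(\cdot,x_k))(x_\ast-x_{k+1})=P(x_\ast-x_k)+b_1(x_\ast-x_k,x_\ast)+b_2(x_\ast,x_\ast)-b_2(x_k,x_k)\ge 0,
\]
so $x_{k+1}\le x_\ast$. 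For the sign of $F(x_{k+1})$, rather than feeding $x_{k+1}$ into the Taylor expansion \eqref{taylor} (whose linear term $F'_{x_k}(x_{k+1}-x_k)$ has no definite sign), I eliminate $Nx_{k+1}$ between $F(x_{k+1})=Mx_{k+1}-a-b(x_{k+1},x_{k+1})$ and \eqref{funit}, which produces
\[
 F(x_{k+1})=-P(x_{k+1}-x_k)-b_1(x_{k+1}-x_k,x_{k+1})+b_2(x_k,x_k)-b_2(x_{k+1},x_{k+1}),
\]
and each of the four terms is $\le 0$ by the monotonicity already proved and the nonnegativity of $P$, $b_1$, $b_2$.

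Finally, $(x_k)$ is nondecreasing and bounded above by $x_\ast$, hence converges to some $x_\infty$ with $0\le x_\infty\le x_\ast$. Passing to the limit in $Nx_{k+1}-b_1(x_k,x_{k+1})=a+Px_k+b_2(x_k,x_k)$, which is legitimate since $b_1$ and $b_2$ are continuous, gives $Mx_\infty=a+b(x_\infty,x_\infty)$, so $x_\infty$ is a nonnegative solution of \eqref{qve}; minimality of $x_\ast$ then forces $x_\infty=x_\ast$. The only real obstacle is the first step: one must notice that $N-b_1(\cdot,x_k)$ dominates $F'_{x_k}$ entrywise while remaining a $Z$-matrix, so that the $M$-matrix comparison result and \autoref{twoconditions} apply; everything afterwards is bookkeeping with the bilinearity and monotonicity of $b$, plus the one-line use of minimality to identify the limit.
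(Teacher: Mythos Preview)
Your argument is correct and follows essentially the same route as the paper: compare $N-b_1(\cdot,x_k)$ entrywise with $F'_{x_k}$ to get a nonsingular $M$-matrix, then push through monotonicity, the bound $x_k\le x_\ast$, and $F(x_k)\le 0$ by induction, finishing with minimality of $x_\ast$. Your organization is slightly tighter than the paper's---you carry $F(x_k)\le 0$ as part of a single induction and use $(N-b_1(\cdot,x_k))(x_{k+1}-x_k)=-F(x_k)$ directly for monotonicity, whereas the paper proves $x_k\le x_\ast$ and $x_k\le x_{k+1}$ in two separate inductions and derives $F(x_k)\le 0$ only at the end---but the substance is the same.

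One minor slip to fix: in the paper's convention, $b_1(\cdot,x_k)$ applied to $v$ is $b_1(v,x_k)$, so in your displayed identities the second argument of $b_1$ is consistently swapped. The correct terms are $b_1(x_\ast,\,x_\ast-x_k)$ in the bound for $x_\ast-x_{k+1}$, $-b_1(x_{k+1},\,x_{k+1}-x_k)$ in the expression for $F(x_{k+1})$, and $Nx_{k+1}-b_1(x_{k+1},x_k)$ in the limit equation. Since $b_1\ge 0$ and both arguments are nonnegative in every instance, the signs you need are unaffected and the proof stands.
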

\begin{proof}
Let $J(x):=N-b_1(\cdot,x)$ and $g(x):=a+Px+b_2(x,x)$. It is clear from the nonnegativity constraints that $J$ is nonincreasing (i.e., $x \leq y \Rightarrow J(x) \geq J(y)$) and $g$ is nondecreasing (i.e., $0 \leq x \leq y \Rightarrow g(x) \leq g(y)$). Furthermore, $J(x)$ is a $Z$-matrix for all $x \geq 0$ and $J(x)\geq F'_x$. Under our assumptions, these results imply that $J(x)$ is a nonsingular $M$-matrix for all $0\leq x\leq x_\ast$, $x\neq x_\ast$, by Corollary~\ref{belownonsing}.

We shall first prove by induction that $x_k\leq x_\ast$. This shows that the iteration is well-posed, since it implies that $J(x_k)$ is a nonsingular $M$-matrix for all $k$. Since $g(x_\ast)=J(x_\ast)x_\ast \leq J(x_k)x_\ast$ by inductive hypothesis, \eqref{funit} implies
\[
J(x_k)(x_\ast-x_{k+1}) \geq g(x_\ast)-g(x_k) \geq 0, 
\]
thus, since $J(x_k)$ is a nonsingular $M$-matrix by inductive hypothesis, $x_\ast-x_{k+1} \geq 0$.

We now prove by induction that $x_k \leq x_{k+1}$. For the base step, since we have $F(x_0) \leq 0$, and $J(x_0)x_0-g(x_0)\leq 0$, thus $x_1=J(x_0)^{-1}g(x_0) \geq x_0$. For $k \geq 1$, 
\[
 J(x_{k-1})(x_{k+1}-x_k) \geq J(x_k)x_{k+1}-J(x_{k-1})x_k=g(x_k)-g(x_{k-1})\geq 0.
\]
thus $x_k\leq x_{k+1}$. 
The sequence $x_k$ is monotonic and bounded above by $x_\ast$, thus it converges. Let $x$ be its limit; by passing \eqref{funit} to the limit, we see that $x$ is a solution. But since $x\leq x_\ast$ and $x_\ast$ is minimal, it must be the case that $x=x_\ast$.

Finally, for each $k \geq 1$ we have
\[
 F(x_k)=J(x_k)x_k-g(x_k) \leq J(x_{k-1})x_{k}-g(x_{k-1})=0. \qedhere
\]
\end{proof}
\begin{theorem}\label{optfunit}
Let $f$ be the map defining the functional iteration \eqref{funit}, i.e. $f(x_k)=J(x_k)^{-1}g(x_k)=x_{k+1}$. Let $\hat J$, $\hat g$, $\hat f$ be the same maps as $J$, $g$, $f$ but for the special choice $b_2=0$, $P=0$. Then $\hat f^k(x) \geq f^k(x)$, i.e., the functional iteration with $b_2=0$, $P=0$ has the fastest convergence among all those defined by \eqref{funit}.
\end{theorem}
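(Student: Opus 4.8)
The statement compares two sequences that both increase monotonically to $x_\ast$ from below (by the previous theorem), so ``fastest convergence'' just means ``largest iterate at every step''. The plan is therefore to prove $\hat f^k(x)\ge f^k(x)$ by induction on $k$, for every admissible starting vector $x$ (one with $0\le x\le x_\ast$ and $F(x)\le 0$, so that the previous theorem applies to both splittings, in particular to the $b_2=0$, $P=0$ one which is a legitimate instance of \eqref{funit} with $N=M$). The induction rests on two one-step facts: the iteration maps are \emph{monotone} on the order interval $[0,x_\ast]$, and $\hat f$ \emph{dominates} $f$ pointwise on admissible vectors.

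\emph{Monotonicity.} For $0\le x\le y\le x_\ast$ the matrix $J(x)=N-b_1(\cdot,x)$ is $\ge J(y)$ entrywise, and by Corollary~\ref{belownonsing} together with Theorem~\ref{Mmatrices} and Theorem~\ref{twoconditions} both $J(x)$ and $J(y)$ are nonsingular $M$-matrices as long as the argument is $\neq x_\ast$; hence $0\le J(x)^{-1}\le J(y)^{-1}$. Since $g(x)=a+Px+b_2(x,x)$ is nonnegative and nondecreasing, the product $f(x)=J(x)^{-1}g(x)$ is nondecreasing, and the same holds for $\hat f$ (the special case $J(x)=M-b(\cdot,x)$, $g\equiv a$).

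\emph{Domination.} Fix an admissible $x$ with $x\ne x_\ast$ and write $y:=f(x)$, $\hat y:=\hat f(x)$. First, $\hat y\ge x$: from $(M-b(\cdot,x))\hat y=a$ and $F(x)\le 0$, i.e.\ $(M-b(\cdot,x))x=Mx-b(x,x)\le a$, one gets $(M-b(\cdot,x))(x-\hat y)\le 0$; since $M-b(\cdot,x)=F'_x+b(x,\cdot)\ge F'_x$ is a nonsingular $M$-matrix for $x\ne x_\ast$ (again by Corollary~\ref{belownonsing} and Theorem~\ref{twoconditions}), it follows that $x-\hat y\le 0$. Second, subtracting the defining relations of $y$ and $\hat y$ and using $M=N-P$, $b=b_1+b_2$ and the bilinearity of $b_2$, one obtains
\[
  (N-b_1(\cdot,x))(\hat y-y)=P(\hat y-x)+b_2(\hat y-x,\,x).
\]
By the first step the right-hand side is nonnegative, and $N-b_1(\cdot,x)=J(x)$ is a nonsingular $M$-matrix, so $\hat y-y\ge 0$, i.e.\ $\hat f(x)\ge f(x)$.

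\emph{Induction.} The base case $k=0$ is the equality $x=x$. Assume $\hat f^k(x)\ge f^k(x)$ and set $x_k:=f^k(x)$; by the previous theorem $x_k$ is admissible with $0\le x_k\le x_\ast$, and $\hat f^k(x)\le x_\ast$ as well. If $x_k\ne x_\ast$ then
\[
  \hat f^{k+1}(x)=\hat f\!\left(\hat f^k(x)\right)\ge \hat f(x_k)\ge f(x_k)=f^{k+1}(x),
\]
using monotonicity of $\hat f$ for the first inequality and the domination step for the second. If instead $x_k=x_\ast$ then $f^j(x)=x_\ast$ for all $j\ge k$ while $\hat f^j(x)\le x_\ast$ always, so the claim holds trivially from that index on; the symmetric case where $\hat f^k(x)$ reaches $x_\ast$ is handled the same way. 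I expect the only genuine work to be the bookkeeping of the $M$-matrix hypotheses — the nonsingularity of $J(x_k)$ and of $M-b(\cdot,x_k)$ along both orbits, and the nonnegativity and monotonicity of their inverses — rather than any new idea; these all come from the same combination of Corollary~\ref{belownonsing}, Theorem~\ref{Mmatrices} and Theorem~\ref{twoconditions} already invoked in the proof of the previous theorem.
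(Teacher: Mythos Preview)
Your proof is correct and follows the same overall architecture as the paper's: establish that each iteration map is monotone on $[0,x_\ast]$, show the pointwise domination $\hat f(x)\ge f(x)$, and conclude by induction.

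The difference lies in how the domination step is proved. The paper observes that for \emph{every} splitting one has the identity $J(x)x-g(x)=F(x)$, so
\[
x-f(x)=J(x)^{-1}F(x)\ \ge\ \hat J(x)^{-1}F(x)=x-\hat f(x),
\]
using only $F(x)\le 0$ and $0\le J(x)^{-1}\le \hat J(x)^{-1}$ (since $\hat J(x)\le J(x)$ are nonsingular $M$-matrices). This gives $\hat f(x)\ge f(x)$ in one line, without the auxiliary inequality $\hat f(x)\ge x$. Your route --- first proving $\hat f(x)\ge x$ from $F(x)\le 0$, then subtracting the defining relations to get $J(x)(\hat f(x)-f(x))=P(\hat f(x)-x)+b_2(\hat f(x)-x,x)\ge 0$ --- is equally valid but longer. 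The paper's identity is the more economical device and also makes transparent why the choice $P=0$, $b_2=0$ is extremal: it maximises $J(x)^{-1}$ among all admissible splittings, and $F(x)$ is splitting-independent.
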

\begin{proof}
For each splitting, the functional iteration $f(x)$ is monotonic, i.e., $f(x)\leq f(y)$ whenever $0\leq x\leq y$. Therefore, it suffices to prove that $f(x)\leq \hat{f}(x)$. We have
\begin{align*}
 x-f(x)=&x-J(x)^{-1}g(x)=J(x)^{-1}(J(x)x-g(x))=J(x)^{-1}F(x)\\
\geq& \hat{J}(x)^{-1}F(x)=\hat J(x)^{-1}(\hat J(x)x-\hat g(x))=x-\hat f(x),
\end{align*}
which shows our claim.
\end{proof}
\begin{corollary}
Let 
\begin{equation}\label{gs}
x^{GS}_{k+1}=J(y_k)^{-1}g_k,
\end{equation}
where $y_k$ is a vector such that $x_k \leq y_k \leq x_{k+1}$, and $g_k$ a vector such that $g(x_k) \leq g_k \leq g(x_{k+1})$. It can be proved with the same arguments that $x_{k+1} \leq x^{GS}_{k+1}\leq x_\ast$. This implies that we can perform the iteration in a ``Gauss-Seidel'' fashion: if in some place along the computation an entry of $x_k$ is needed, and we have already computed the same entry of $x_{k+1}$, we can use that entry instead. It can be easily shown that $J(x_k)^{-1}g(x_k) \leq J(y_k)^{-1}g_k$, therefore the Gauss-Seidel version of the iteration converges faster than the original one.
\end{corollary}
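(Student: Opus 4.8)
The plan is to re-run, essentially verbatim, the monotonicity argument used in the convergence theorem for \eqref{funit} above, but with $y_k$ in place of $x_k$ inside the matrix $J$ and with $g_k$ in place of $g(x_k)$ on the right-hand side, and to isolate the one genuinely new ingredient needed to compare the ``solve'' $J(y_k)^{-1}g_k$ with the plain one $J(x_k)^{-1}g(x_k)$. Concretely I would prove, in order: (i) the comparison $J(x_k)^{-1}g(x_k)\le J(y_k)^{-1}g_k$; (ii) the lower bound $x_{k+1}\le x^{GS}_{k+1}$, which is immediate from (i) since $x_{k+1}=J(x_k)^{-1}g(x_k)$ and $x^{GS}_{k+1}=J(y_k)^{-1}g_k$; (iii) the upper bound $x^{GS}_{k+1}\le x_\ast$; and finally (iv) that, combining (i) with monotonicity of the Gauss--Seidel map, an induction on $k$ gives domination of the plain iterates by the Gauss--Seidel ones at every step.

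For step (i) I would first record the order-reversing behaviour of inversion on nonsingular $M$-matrices: if $N_1\le N_2$ are both nonsingular $M$-matrices then $N_1^{-1}\ge N_2^{-1}\ge 0$, which follows from the identity $N_1^{-1}-N_2^{-1}=N_1^{-1}(N_2-N_1)N_2^{-1}$ together with $N_1^{-1},N_2^{-1}\ge 0$ (item~3 of Theorem~\ref{Mmatrices}) and $N_2-N_1\ge 0$. Now $J$ is nonincreasing and $x_k\le y_k$, so $J(y_k)\le J(x_k)$; moreover $0\le x_k\le y_k\le x_{k+1}\le x_\ast$, so, by the same reasoning as in the proof of the convergence theorem (the $Z$-matrix $J(y_k)$ dominates $F'_{y_k}$, which is a nonsingular $M$-matrix by Corollary~\ref{belownonsing}, hence $J(y_k)$ is a nonsingular $M$-matrix by Theorem~\ref{twoconditions}), both $J(x_k)$ and $J(y_k)$ are nonsingular $M$-matrices. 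Therefore $0\le J(x_k)^{-1}\le J(y_k)^{-1}$, and since $0\le g(x_k)\le g_k$,
\[
 x_{k+1}=J(x_k)^{-1}g(x_k)\le J(y_k)^{-1}g(x_k)\le J(y_k)^{-1}g_k=x^{GS}_{k+1},
\]
which is simultaneously the ``easily shown'' inequality and step (ii).

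For step (iii) I would copy the corresponding step of the convergence theorem with $y_k$ substituted for $x_k$: from $J(x_\ast)x_\ast=g(x_\ast)$ and $J(y_k)\ge J(x_\ast)$ (because $y_k\le x_\ast$) we get $J(y_k)x_\ast\ge g(x_\ast)\ge g(x_{k+1})\ge g_k$, hence $J(y_k)(x_\ast-x^{GS}_{k+1})\ge 0$, and multiplying by $J(y_k)^{-1}\ge 0$ gives $x^{GS}_{k+1}\le x_\ast$. Step (iv) then follows because the Gauss--Seidel map is monotone in its data by the usual nonnegativity argument, so applying (i) along the two sequences and inducting yields domination, i.e. faster convergence. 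The one point that needs care --- the expected obstacle --- is the well-definedness of $x^{GS}_{k+1}$ in the degenerate situation where $F'_{x_\ast}$ is singular: if $y_k=x_\ast$ then $J(y_k)=J(x_\ast)$ may be singular. This is dealt with by noting that $y_k\le x_{k+1}\le x_\ast$, so $y_k=x_\ast$ forces $x_{k+1}=x_\ast$, i.e. the plain iteration has already reached the minimal solution and there is nothing left to prove; for every earlier step $y_k\le x_{k+1}$ with $x_{k+1}\neq x_\ast$, whence $y_k\neq x_\ast$ and $J(y_k)$ is genuinely a nonsingular $M$-matrix, so the argument above applies.
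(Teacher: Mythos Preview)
Your proposal is correct and is exactly the kind of argument the paper has in mind: the corollary carries no proof in the paper beyond the phrases ``with the same arguments'' and ``easily shown'', and your steps (i)--(iii) are precisely those arguments made explicit, including the order-reversal of inversion on nonsingular $M$-matrices and the reuse of the bound $J(y_k)x_\ast\ge J(x_\ast)x_\ast=g(x_\ast)\ge g_k$. Your treatment of the borderline case $y_k=x_\ast$ is a nice addition that the paper does not spell out.
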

\begin{remark}
The iteration \eqref{funit} depends on $b$ as a bilinear form, while Equation \eqref{qve} and its solution depend only on $b$ as a quadratic form. Therefore, different choices of the bilinear form $b$ lead to different functional iterations for the same equation. Since for each iterate of each functional iteration both $x_k\leq x_\ast$ and $F(x_k)\leq 0$ hold (thus $x_k$ is a valid starting point for a new functional iteration), we may safely switch between different functional iterations at every step.
\end{remark}
\paragraph{Concrete cases}
For E1, the algorithm called \emph{depth} in \cite{BeanKontoleonTaylor} is given by choosing $P=0, b_1=0$. The algorithm called \emph{order} in the same paper is obtained in two variants with $P=0, b_2=0$, either on the original problem or on the one with bilinear form $\tilde b(x,y):=b(y,x)$. The algorithm called \emph{thicknesses} in \cite{HautphenneLatoucheRemiche} is given by performing alternately one iteration of each of the two above methods.

For E2, Lu's simple iteration \cite{Lu02} and the algorithm NBJ in \cite{GaoBaiLu} can be seen as the basic iteration \eqref{fp1} and the iteration \eqref{funit} with $P=0, b_2=0$, respectively. The algorithm NBGS in the same paper is a Gauss-Seidel-like variant. It is shown in \cite{refereeGL10} that NBGS is twice as fast as NBJ in terms of asymptotic rate of convergence.

For E3, the fixed point iterations in \cite{GuoLaub} are given by $b_2=b$ and different choices of $P$. The iterations in \cite{JuangChen} are the one given by $b_2=0, P=0$ and a Gauss-Seidel-like variant.

For E4, the iterations in \cite[chapter 6]{BiniLatoucheMeiniBook} can also be reinterpreted in our framework.
\section{Newton's method}

\subsection{Definition and convergence}
We may define the Newton method for the equation \eqref{qve} as
\begin{equation}\label{newton}
 F'_{x_k} (x_{k+1}-x_k) = -F(x_k).
\end{equation}
Alternatively, we may write
\begin{equation}\label{fxkbuffo}
 F'_{x_k}(x_{k+1})=a-b(x_k,x_k).
\end{equation}
Also notice that
\begin{equation}\label{altrofxk}
 -F(x_{k+1})=b(x_{k+1}-x_k,x_{k+1}-x_k).
\end{equation}
\begin{theorem}
Suppose that the condition in Corollary~\ref{belownonsing} holds. The Newton method \eqref{newton} starting from $x_0=0$ is well-defined, and the generated sequence $x_k$ converges monotonically to $x_\ast$.
\end{theorem}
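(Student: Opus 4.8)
The plan is to prove by induction on $k$ the conjunction of the following assertions: $0 \le x_k \le x_{k+1} \le x_\ast$, $F(x_k) \le 0$, and $F'_{x_k}$ is a nonsingular $M$-matrix (so that $x_{k+1}$ is well-defined and, moreover, $(F'_{x_k})^{-1} \ge 0$). Well-definedness of the step rests on Corollary~\ref{belownonsing}: as long as $x_k \le x_\ast$ with $x_k \neq x_\ast$, that corollary guarantees $F'_{x_k}$ is a nonsingular $M$-matrix; and if $x_k = x_\ast$ for some $k$, then $F(x_k) = 0$ and \eqref{newton} forces $x_{k+1} = x_\ast$, so the sequence is eventually constant and there is nothing further to prove. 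For the base case, $x_0 = 0$ gives $F(x_0) = -a \le 0$ and $F'_{x_0} = M$, a nonsingular $M$-matrix by hypothesis, whence $x_1 = M^{-1}a \ge 0 = x_0$.

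For the monotonicity step, rewrite \eqref{newton} as $F'_{x_k}(x_{k+1} - x_k) = -F(x_k) \ge 0$, using the inductive hypothesis $F(x_k) \le 0$; since $(F'_{x_k})^{-1} \ge 0$, we conclude $x_{k+1} - x_k \ge 0$.

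For the upper bound $x_{k+1} \le x_\ast$, apply the Taylor identity \eqref{taylor} at $x = x_k$, $y = x_\ast$, recalling $F(x_\ast) = 0$ and $F''_{x_k}(w,w) = -2b(w,w)$, to get
\[
 0 = F(x_k) + F'_{x_k}(x_\ast - x_k) - b(x_\ast - x_k, x_\ast - x_k).
\]
Subtracting \eqref{newton} in the form $F(x_k) = -F'_{x_k}(x_{k+1} - x_k)$ yields
\[
 F'_{x_k}(x_\ast - x_{k+1}) = b(x_\ast - x_k, x_\ast - x_k) \ge 0,
\]
the last inequality holding because $x_\ast - x_k \ge 0$ by the inductive hypothesis; applying $(F'_{x_k})^{-1} \ge 0$ gives $x_\ast - x_{k+1} \ge 0$. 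Finally, $F(x_{k+1}) \le 0$ follows at once from \eqref{altrofxk}, since $-F(x_{k+1}) = b(x_{k+1} - x_k, x_{k+1} - x_k) \ge 0$ once $x_{k+1} \ge x_k$ is known. This closes the induction, so $\{x_k\}$ is nondecreasing and bounded above by $x_\ast$, hence convergent to some $x \le x_\ast$. Passing \eqref{fxkbuffo} to the limit (the coefficients depend continuously on $x_k$) gives $F'_x(x) = a - b(x,x)$, which simplifies to $Mx = a + b(x,x)$; thus $x$ solves \eqref{qve}, and by minimality of $x_\ast$ we get $x = x_\ast$.

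The only genuinely delicate point is ensuring that Corollary~\ref{belownonsing} is applicable at every step, i.e.\ ruling out a singular $F'_{x_k}$; this is handled cleanly by the dichotomy above ($x_k = x_\ast$, in which case we are already done, versus $x_k \neq x_\ast$, in which case the corollary applies, using the inductive bound $x_k \le x_\ast$). Everything else is routine nonnegative linear algebra together with the exact quadratic expansion \eqref{taylor}. It is worth noting that, unlike in the functional-iteration theorem, no separate hypothesis ``$F(x_0) \le 0$'' is required here, because the canonical starting point $x_0 = 0$ automatically satisfies $F(0) = -a \le 0$.
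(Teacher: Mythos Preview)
Your proof is correct and follows essentially the same induction scheme as the paper's own proof: both establish $x_k \le x_{k+1} \le x_\ast$ via the identities \eqref{fxkbuffo}--\eqref{altrofxk} (equivalently the Taylor expansion \eqref{taylor}) together with Corollary~\ref{belownonsing}, and then pass to the limit. Your version is slightly more explicit about the well-definedness dichotomy $x_k = x_\ast$ versus $x_k \neq x_\ast$ and carries $F(x_k)\le 0$ as an auxiliary invariant, but these are cosmetic differences rather than a different approach.
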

\begin{proof}
We shall prove by induction that $x_k \leq x_{k+1} \leq x_\ast$. We have $x_1=M^{-1}a\geq 0$ and $x_\ast=M^{-1}a+M^{-1}b(x_\ast,x_\ast)\geq M^{-1}a$, so the base step holds. From \eqref{altrofxk}, we get
\[
 F'_{x_{k+1}}(x_{k+2}-x_{k+1}) = b(x_{k+1}-x_k,x_{k+1}-x_k) \geq 0,
\]
thus, since $F'_{x_{k+1}}$ is a nonsingular M-matrix, $x_{k+2}\geq x_{k+1}$. Similarly, from \eqref{fxkbuffo},
\[
 F'_{x_{k+1}}(x_\ast-x_{k+2})=b(x_\ast-x_{k+1},x_\ast-x_{k+1}),
\]
thus $x_{k+2}\leq x_\ast$.
The sequence $x_k$ is monotonic and bounded from above by $x_\ast$, thus it converges; by passing \eqref{newton} to the limit we see that its limit must be a solution of $\eqref{qve}$, hence $x_\ast$.
\end{proof}

\subsection{Concrete cases}
Newton methods for E1 and E2 appear respectively in \cite{HautphenneLatoucheRemiche} and \cite{LuNewton}. Monotonic convergence of the Newton method for E3 has originally been proved with the additional hypothesis $x_1>0$ in \cite{GuoLaub} and \cite{GuoNAREandWienerHopf}, but this assumption was later removed in \cite{GuoH07}. For E4, the Newton method is described in a more general setting in \cite{BiniLatoucheMeiniBook,latouchenewton}, and can be implemented using the method described in \cite{amatoetal} for the solution of the resulting Sylvester equation. However, different methods such as cyclic and logarithmic reduction \cite{BiniLatoucheMeiniBook} are usually preferred due to their lower computational cost.

\section{Modified Newton method}\label{s:mn}
Recently Hautphenne and Van Houdt \cite{HautphenneVanHoudt} proposed a different version of Newton's method for E1 that has a better convergence rate than the traditional one. Their idea is to apply the Newton method to the equation
\begin{equation}\label{hyper}
 G(x)=x-(M-b(\cdot,x))^{-1}a,
\end{equation}
which is equivalent to \eqref{qve}.

\subsection{Theoretical properties}
Let us set for the sake of brevity $R_x:=M-b(\cdot,x)$. When the condition in Corollary~\ref{belownonsing} holds, $R_x\geq F'_x$ is nonsingular for every $x\leq x_\ast$.
 The Jacobian of $G$ is
\[
 G'_x=I-R_x^{-1}b(R_x^{-1}a,\cdot).
\]
As for the original Newton method, it is a $Z$-matrix, and a nonincreasing function of  $x$. It is easily seen that $G'_{x_\ast}$ is an $M$-matrix. The proof in Hautphenne and Van Houdt \cite{HautphenneVanHoudt} is of probabilistic nature and cannot be extended to our setting; we shall provide here a different one. We have
\[
 G'_x \geq G'_{x_\ast}=R_{x_{\ast}}^{-1}\left(M-b(\cdot,x_\ast)-b(R_{x_\ast}^{-1}a,\cdot)\right)=
 R_{x_\ast}^{-1}\left(M-b(\cdot,x_{\ast})-b(x_{\ast},\cdot)\right)=R_{x_\ast}^{-1}F'_{x_\ast};
\]
Thus when the condition in Corollary~\ref{belownonsing} holds, $G'_x$ is a nonsingular M-matrix and thus the modified Newton method is well-defined. The monotonic convergence is easily proved in the same fashion as for the traditional method.

The following result holds.
\begin{theorem}[\cite{HautphenneVanHoudt}]
Let $\tilde x_k$ be the iterates of the modified Newton method and $x_k$ those of the traditional Newton method, starting from $\tilde x_k=x_k=0$. Then $\tilde x_k-x_k \geq 0$.
\end{theorem}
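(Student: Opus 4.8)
The plan is to prove the inequality $\tilde x_k - x_k \geq 0$ by induction on $k$, comparing the two iterations step by step. The base case is immediate since both sequences start at $0$. For the inductive step, I would assume $\tilde x_k \geq x_k$ (and, since both iterations converge monotonically from below, also $x_k, \tilde x_k \leq x_\ast$) and aim to show $\tilde x_{k+1} \geq x_{k+1}$. The natural strategy is to find a common ``test matrix'' — a nonsingular $M$-matrix $W$ — such that $W(\tilde x_{k+1} - x_{k+1}) \geq 0$, so that nonnegativity of $W^{-1}$ closes the induction.

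First I would write out both updates in a comparable form. The traditional Newton step, by \eqref{fxkbuffo}, is $F'_{x_k}(x_{k+1}) = a - b(x_k,x_k)$. For the modified method one must unwind the Newton step for $G$: from $G'_{\tilde x_k}(\tilde x_{k+1} - \tilde x_k) = -G(\tilde x_k)$ together with the factorization $G'_{\tilde x_k} = R_{\tilde x_k}^{-1}(M - b(\cdot,\tilde x_k) - b(R_{\tilde x_k}^{-1}a,\cdot))$ derived in the text, I would multiply through by $R_{\tilde x_k}$ and simplify using $G(\tilde x_k) = \tilde x_k - R_{\tilde x_k}^{-1}a$, so that $R_{\tilde x_k}G(\tilde x_k) = R_{\tilde x_k}\tilde x_k - a = M\tilde x_k - b(\tilde x_k,\tilde x_k) - a = -F(\tilde x_k)$. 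This should yield a clean identity of the shape $\bigl(M - b(\cdot,\tilde x_k) - b(R_{\tilde x_k}^{-1}a,\cdot)\bigr)\tilde x_{k+1} = (\text{something nonnegative involving } \tilde x_k \text{ and } a)$, i.e., the modified Newton iterate solves a linear system whose coefficient matrix is $G'_{\tilde x_k}$ up to the invertible left factor $R_{\tilde x_k}$, and whose right-hand side I would massage into a form comparable to $a - b(x_k,x_k)$.

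Then I would compare. Using the inductive hypothesis $\tilde x_k \geq x_k$ and monotonicity of $b$, together with the fact (noted in the text) that $R_{\tilde x_k}^{-1}a \geq 0$ is itself a subsolution-type quantity — indeed $\tilde x_1 = R_0^{-1}a = M^{-1}a = x_1 \leq x_\ast$ and more generally $R_x^{-1}a$ relates to the minimal solution — I expect the difference of the two right-hand sides and the difference of the two coefficient matrices applied to a common vector (say $x_{k+1}$ or $\tilde x_{k+1}$) to each be sign-definite in the right direction. Concretely, I would try to show $G'_{\tilde x_k}$-type matrix applied to $(\tilde x_{k+1} - x_{k+1})$ is $\geq$ a sum of terms of the form $b(\tilde x_k - x_k, \cdot)$ acting on nonnegative vectors, plus $b(R^{-1}_{\tilde x_k}a, \cdot)$ corrections, all nonnegative; since by Corollary~\ref{belownonsing} and the text's remarks $G'_{\tilde x_k}$ is a nonsingular $M$-matrix (as $\tilde x_k \leq x_\ast$), its inverse is nonnegative and the induction closes.

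The main obstacle I anticipate is the bookkeeping in the second step: the modified method's linear system involves the extra vector $R_{\tilde x_k}^{-1}a$ inside the bilinear form, which has no analogue in the traditional step, so matching the two right-hand sides is not a term-by-term affair. The key auxiliary fact needed is a monotonicity/comparison statement for $R_x^{-1}a$ as $x$ varies and a bound $R_x^{-1}a \leq x_\ast$ (or that $R_{\tilde x_k}^{-1}a \geq x_{k}$ or similar), which lets one absorb the extra term with the correct sign; establishing precisely which comparison is both true and strong enough is the delicate point. Once that lemma is in hand, the rest is a routine $M$-matrix sign-chase of the kind already used repeatedly in the preceding sections.
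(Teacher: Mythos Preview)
The paper does not actually supply a proof of this theorem: immediately after stating it, the text says only that ``the proof in Hautphenne and Van Houdt \cite{HautphenneVanHoudt} can be adapted to our setting with minor modifications.'' So there is no in-paper argument to compare your proposal against; your outline --- induction on $k$, rewriting both updates as linear systems with $M$-matrix coefficient, and closing via nonnegativity of the inverse --- is precisely the shape such an adaptation would take, and in that sense it matches what the paper defers to.

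Two remarks on the proposal itself. First, a sign slip: $R_{\tilde x_k}G(\tilde x_k)=R_{\tilde x_k}\tilde x_k-a=M\tilde x_k-b(\tilde x_k,\tilde x_k)-a=F(\tilde x_k)$, not $-F(\tilde x_k)$; the minus sign you want comes from the $-G(\tilde x_k)$ on the right of the Newton step, so the conclusion $R_{\tilde x_k}G'_{\tilde x_k}(\tilde x_{k+1}-\tilde x_k)=-F(\tilde x_k)$ is still correct. Second, and more substantively, what you have written is candidly a plan rather than a proof: you correctly isolate the obstacle (the extra vector $R_{\tilde x_k}^{-1}a$ that has no counterpart in the traditional step) and name the missing ingredient (a monotonicity/comparison statement for $R_x^{-1}a$), but you do not establish it or carry out the sign-chase. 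The relevant auxiliary fact is $R_{\tilde x_k}^{-1}a\geq \tilde x_k$, equivalent to $F(\tilde x_k)\leq 0$, which one proves for the modified iterates just as \eqref{altrofxk} does for the traditional ones; combined with the inductive hypothesis $\tilde x_k\geq x_k$ and the monotonicity $x_{k+1}\geq x_k$, this is what makes the bookkeeping close. Until that computation is actually written down, the proposal remains a (correct) strategy sketch rather than a proof.
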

The proof in Hautphenne and Van Houdt \cite{HautphenneVanHoudt} can be adapted to our setting with minor modifications.

\subsection{Concrete cases}

Other than for E1, its original setting, the modified Newton method is useful for the other concrete cases of quadratic vector equations.

For E2, let us choose the bilinear map $b$ as
\[
 b\left(\vett{u_1}{v_1},\vett{u_2}{v_2}\right):= \vett{u_1 \circ (Pv_2)}{v_1 \circ (\tilde P u_2)}.
\]
This way, it is easily seen that $b(\cdot,x)$ is a diagonal matrix and $b(x,\cdot)$ has the same structure that allowed a fast (with $O(n^2)$ operations per step) implementation of the traditional Newton's method in Bini \emph{et al.} \cite{BiniIannazzoPoloni}. Therefore the modified Newton method can be implemented with a negligible overhead ($O(n)$ ops per step on an algorithm that takes $O(n^2)$ ops per step) with respect to the traditional one, and increased convergence rate.

We have performed some numerical experiments on the modified Newton method for E2; as can be seen in \autoref{fig:mn}, the modified Newton method does indeed converge faster to the minimal solution, and this allows one to get better approximations to the solution with the same number of steps.
\begin{figure}[t]
\includegraphics{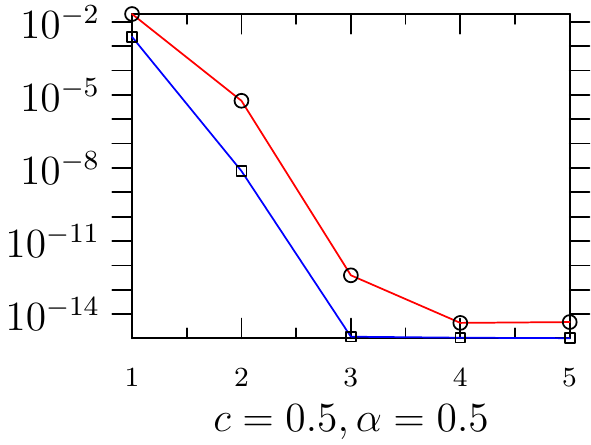}
\phantom{ }
\includegraphics{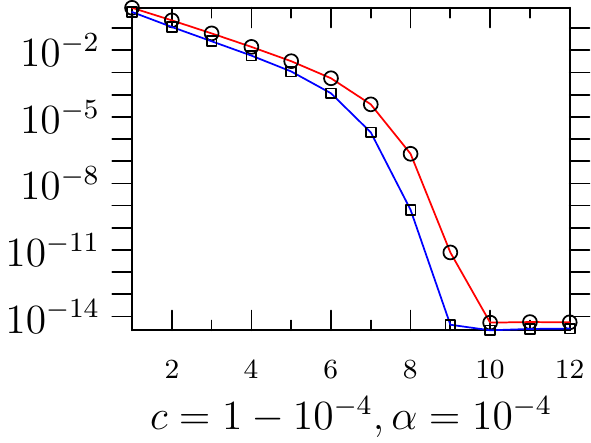}

\includegraphics{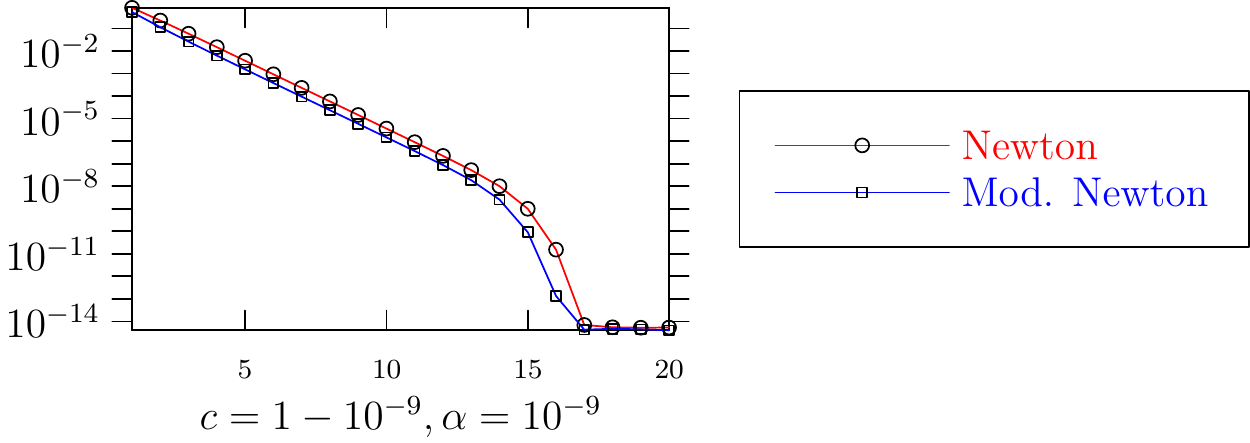}

 \caption{Convergence history of the two Newton methods for E2 for several values of the parameters $\alpha$ and $c$. The plots show the residual Frobenius norm of Equation \eqref{qve} vs. the number of iterations}\label{fig:mn}
\end{figure}

For E3 and E4, the modified Newton method leads to similar equations to the traditional one (continuous- and discrete-time Sylvester equations), but requires additional inversions and products of $m \times m$ matrices; that is, the overhead is of the same order of magnitude $O(m^3)$ of the cost of the Newton step. Therefore it is not clear whether the improved convergence rate makes up for the increase in the computational cost.

\section{Newton method and Cyclic/Logarithmic Reduction}\label{s:nvscr}
\subsection{Recall of Logarithmic and Cyclic Reduction}
Cyclic and Logarithmic Reduction \cite{BiniLatoucheMeiniBook} are two closely related methods for solving E4, which have quadratic convergence and a lower computational cost than Newton's method. Both are based on specific properties of the problem and cannot be extended in a straightforward way to any quadratic vector equation.

Logarithmic Reduction (LR) is based on the fact that if $X$ solves
\[
 X=B_{-1}+B_1 X^2,
\]
then it can be shown with algebraic manipulations that $Y=X^2$ solves the equation
\begin{equation}\label{lr2}
 Y=(I-B_{-1}B_1-B_1B_{-1})^{-1} \left(B_{-1}\right)^2 + (I-B_{-1}B_1-B_1B_{-1})^{-1} \left(B_{1}\right)^2 Y^2,
\end{equation}
with the same structure. Therefore we may start from an approximation $X_1=B_{-1}$ to the solution and refine it with a term $B_1 Y$, where $Y$ is (an approximation to) the solution of \eqref{lr2}. Such an approximation is computed with the same method, and refined successively by applying the same method recursively. The resulting algorithm is reported here as Algorithm~\ref{algo:lr}.
\begin{algorithm}
  \KwIn{$A$, $B$, $C$}
  \KwOut{minimal solution $X$ of $X=A+BX+CX^2$}
  $B_{-1}\leftarrow(I-B)^{-1}A$\;
  $B_1\leftarrow (I-B)^{-1}C$\;
  $X\leftarrow B_{-1}$\;
  $U\leftarrow B_1$\;
  \While{stopping criterion is not satisfied}{
    $C \leftarrow I-B_1B_{-1}-B_{-1}B_1$\;
    $B_{-1} \leftarrow C^{-1}B_{-1}^2$\;
    $B_{1} \leftarrow C^{-1}B_{1}^2$\;
    $X\leftarrow X+UB_{-1}$\;
    $U\leftarrow UB_1$\;
  }
  \KwRet{$X$}
\caption{Logarithmic Reduction for E4 \cite{BiniLatoucheMeiniBook}}\label{algo:lr}
\end{algorithm}
An alternative interpretation of LR \cite{BiniLatoucheMeiniBook} arises by defining the matrix-valued function $f: \mathbb C \to \mathbb C^{m \times m}$ as $f(z)=B_{-1}-z+B_1z^2$ and applying the Graeffe iteration $f \mapsto f(z)f(-z)$, which yields a quadratic polynomial in $z^2$ with the same roots of $f(z)$ plus some additional ones.

Cyclic Reduction (CR) is a similar algorithm, which is connected to LR by simple algebraic relations (see the Bini \emph{et al.} book \cite{BiniLatoucheMeiniBook} for more detail). We shall report it here as Algorithm~\ref{algo:cr}.
\begin{algorithm}
  \KwIn{$A$, $B$, $C$}
  \KwIn{minimal solution $X$ of $X=A+BX+CX^2$}
  $R\leftarrow I-B$\;
  $S\leftarrow I-B$\;
  $A_0 \leftarrow A$\;
  \While{stopping criterion is not satisfied}{    
    $S\leftarrow R-CR^{-1}A$\;
    $X\leftarrow S^{-1}A_0$\;
    $R'\leftarrow R-AR^{-1}C-CR^{-1}A$\;
    $A'\leftarrow AR^{-1}A$\;
    $C'\leftarrow CR^{-1}C$\;
    $R,A,C \leftarrow R',A',C'$\;
  }
  \KwRet{$X$}
\caption{Cyclic Reduction for E4 \cite{BiniLatoucheMeiniBook}}\label{algo:cr}
\end{algorithm}

\subsection{Generalization attempts}
We may attempt to produce algorithms similar to LR and CR for a generic quadratic vector equation. Notice that we cannot look for an equation in $X^2$ in our vector setting, since $x^2$ for a vector $x$ has not a clear definition --- using e.g. the Hadamard (component-wise) product does not lead to a simple equation. Nevertheless, we may try to find an equation in $b(x,x)$, which is the only quadratic expression that makes sense in our context.

We look for an expression similar to the Graeffe iteration. If $x$ solves $0=F(x)=Mx-a-b(x,x)$, then it also solves $b(F(x),F(-x))+b(F(-x),F(x))=0$ (notice that a symmetrization is needed), that is,
\[
\begin{aligned}
 b(x-M^{-1}a-M^{-1}b(x,x) , x+M^{-1}a+M^{-1}b(x,x))+ \\
b(x+M^{-1}a+M^{-1}b(x,x) , x-M^{-1}a-M^{-1}b(x,x))=0.
\end{aligned}
\]
If we set $v_1=M^{-1}b(x,x)$ and exploit the bilinearity of $b(\cdot,\cdot)$, the above equation reduces to
\begin{equation}\label{eq2}
 -b(M^{-1}a,M^{-1}a) + \left(M-b(M^{-1}a,\cdot)-b(\cdot,M^{-1}a)\right)v_1 -b(v_1,v_1)=0,
\end{equation}
which is suitable to applying the same process again. A first approximation to $x$ is given by $M^{-1}a$; if we manage to solve (even approximately) \eqref{eq2}, this approximation can be refined as $x=M^{-1}a+v_1$. We may apply this process recursively, getting an algorithm similar to Logarithmic Reduction. The algorithm is reported here as Algorithm~\ref{algo:nr}.
\begin{algorithm}
 \KwIn{$a$, $M$, $b$}
 \KwOut{minimal solution $x$ of $Mx=a+b(x,x)$}
  $x \leftarrow 0$, $\tilde M \leftarrow M$, $\tilde a \leftarrow a$\;
  \While{stopping criterion is not satisfied}{
  $w \leftarrow \tilde M^{-1} \tilde a$\;
  $x \leftarrow x+w$\;
  $\tilde a \leftarrow b(w,w)$\;
  $ \tilde M \leftarrow  \tilde M-b(w,\cdot)-b(\cdot,w)$\;
  }
  \KwRet{$x$}
 \caption{A Cyclic Reduction-like formulation of Newton's method for a quadratic vector equation}\label{algo:nr}
\end{algorithm}
It is surprising to see that this algorithm turns out to be equivalent to Newton's method. In fact, it is easy to prove by induction the following proposition.
\begin{theorem}
 Let $x_k$ be the iterates of Newton's method on \eqref{qve} starting from $x_0=0$.
 At the $k$th iteration of the \textbf{while} cycle in Algorithm~\ref{algo:nr},
 $x=x_k$, $w=x_{k+1}-x_k$, $\tilde M=F'_{x_k}$, $\tilde a=-F(x_k)$.
\end{theorem}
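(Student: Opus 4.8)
The plan is to prove the four simultaneous identities
$x=x_k$, $w=x_{k+1}-x_k$, $\tilde M=F'_{x_k}$, $\tilde a=-F(x_k)$
by induction on the iteration counter $k$, reading off the updates performed in one pass through the \textbf{while} loop of Algorithm~\ref{algo:nr}. For the base case $k=0$: before the loop we have $x=0=x_0$, $\tilde M=M=F'_{x_0}$ (since $F'_{x_0}=M-b(0,\cdot)-b(\cdot,0)=M$), and $\tilde a=a=-F(x_0)$ (since $F(0)=-a$); then the loop computes $w=\tilde M^{-1}\tilde a=M^{-1}a=x_1-x_0$ by the definition of the Newton step \eqref{newton} from $x_0=0$. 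So the claimed relations hold after the first half of the first iteration, and the point is to show they are reproduced at the start of the next one.

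For the inductive step, assume that at the start of the $k$th iteration we have $\tilde M=F'_{x_k}$ and $\tilde a=-F(x_k)$, with $x=x_k$ already stored. The loop first sets $w\leftarrow \tilde M^{-1}\tilde a=(F'_{x_k})^{-1}(-F(x_k))=x_{k+1}-x_k$, which is exactly the Newton correction \eqref{newton}; hence after $x\leftarrow x+w$ we have $x=x_{k+1}$. It then sets $\tilde a\leftarrow b(w,w)=b(x_{k+1}-x_k,x_{k+1}-x_k)=-F(x_{k+1})$, where the last equality is precisely identity \eqref{altrofxk}. Finally $\tilde M\leftarrow \tilde M-b(w,\cdot)-b(\cdot,w)$; here I would substitute $\tilde M=F'_{x_k}=M-b(x_k,\cdot)-b(\cdot,x_k)$ and $w=x_{k+1}-x_k$, and use bilinearity of $b$ to collect the terms:
$M-b(x_k,\cdot)-b(\cdot,x_k)-b(x_{k+1}-x_k,\cdot)-b(\cdot,x_{k+1}-x_k)=M-b(x_{k+1},\cdot)-b(\cdot,x_{k+1})=F'_{x_{k+1}}$.
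Thus the loop body carries the tuple $(x_k,\,\cdot,\,F'_{x_k},\,-F(x_k))$ to $(x_{k+1},\,x_{k+1}-x_k,\,F'_{x_{k+1}},\,-F(x_{k+1}))$, completing the induction.

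The only genuinely substantive ingredient is the telescoping identity \eqref{altrofxk}, $-F(x_{k+1})=b(x_{k+1}-x_k,x_{k+1}-x_k)$, which is what makes the recursively generated "residual" $\tilde a$ agree with $-F$ at the next Newton iterate; everything else is bookkeeping plus the bilinearity of $b$. I expect the main (minor) obstacle to be simply making sure the identification is stated at a well-defined point in the loop — I will take "at the $k$th iteration of the \textbf{while} cycle" to mean the values of the variables immediately after executing the loop body for the $k$th time, so that $x=x_k$ records the previous iterate before the update and $w$, $\tilde a$, $\tilde M$ carry the freshly computed quantities; with that convention the four equalities are exactly as stated, and no further estimates or convergence arguments are needed since the equivalence is purely algebraic.
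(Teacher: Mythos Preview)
Your proof is correct and is exactly the induction argument the paper has in mind (the paper itself only states that it ``is easy to prove by induction'' and gives no further detail). The key computations---the Newton step giving $w=x_{k+1}-x_k$, the identity \eqref{altrofxk} giving $\tilde a=-F(x_{k+1})$, and bilinearity giving $\tilde M=F'_{x_{k+1}}$---are all handled properly.

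One small expository point: your final paragraph's convention (``immediately after executing the loop body for the $k$th time'') is not quite consistent with the statement as written, since after the $k$th full pass one has $w=x_k-x_{k-1}$ rather than $x_{k+1}-x_k$. The cleanest reading, which your inductive step actually uses, is that $x=x_k$, $\tilde M=F'_{x_k}$, $\tilde a=-F(x_k)$ are the values at the \emph{start} of the pass, and $w=x_{k+1}-x_k$ is its value just after the first line of that pass. This is a harmless bookkeeping issue and does not affect the mathematics.
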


The modified Newton method discussed in \autoref{s:mn} can also be expressed in a form that looks very similar to LR/CR. We may express all the computations of step $k+1$ in terms of $R_{x_k}^{-1}b$ and $R_{x_k}^{-1}a$ only: in fact,
\[
 R_{x_{k+1}}=R_{x_k}-b(\cdot,x_{k+1}-x_k)=R_{x_k}\left(I-R_{x_k}^{-1}b(\cdot,x_{k+1}-x_k) \right),
\]
and thus
\[
 R_{x_{k+1}}^{-1}R_{x_k}=\left(I-R_{x_k}^{-1}b(\cdot,x_{k+1}-x_k)  \right)^{-1}.
\]
The resulting algorithm is reported here as Algorithm~\ref{algo:mnr}.
\begin{algorithm}
 \KwIn{$a$, $M$, $b$}
 \KwOut{minimal solution $x$ of $Mx=a+b(x,x)$}
  $x \leftarrow 0$, $\tilde a \leftarrow a$, $\tilde b \leftarrow b$, $\tilde w \leftarrow 0$\;
  \While{stopping criterion is not satisfied}{
  $\tilde a \leftarrow (I-\tilde b(\cdot,w))^{-1}\tilde a$\;
  $\tilde b \leftarrow (I-\tilde b(\cdot,w))^{-1}\tilde b$\;
  $w \leftarrow (I-\tilde b(\tilde a,\cdot))^{-1}(\tilde a-x)$\;
  $x \leftarrow x+w$\;
  }
  \KwRet{$x$}
 \caption{A Cyclic Reduction-like formulation of the modified Newton method for a quadratic vector equation}\label{algo:mnr}
\end{algorithm}

The similarities between the two Newton formulations and LR are apparent. In all of them, only two variables ($B_{-1}$ and $B_1$, $\tilde a$ and $\tilde M$, $\tilde a$ and $\tilde b$) are stored and used to carry on the successive iteration, and some extra computations and variables are needed to extract from them the approximation of the solution ($X$, $x$) which is refined at each step with a new additive term.

It is a natural question whether there are algebraic relations among LR and Newton methods, or if LR can be interpreted as an inexact Newton method (see e.g. Ortega and Rheinboldt \cite{OrtegaRheinboldt}), thus providing an alternative proof of its quadratic convergence. However, we were not able to find an explicit relation among the two classes of methods. This is mainly due to the fact that the LR and CR methods are based upon the squaring $X \mapsto X^2$, which we have no means to translate in our vector setting. To this regard we point out that we cannot invert the matrix $C$, since in many applications it is strongly singular.

\section{Positivity of the minimal solution}\label{sec:pos}
\subsection{Role of the positivity}
In many of the above theorems, the hypothesis $x_\ast>0$ is required. Is it really necessary? What happens if it is not satisfied?

In all the algorithms we have exposed, we worked with only vectors $x$ such that $0 \leq x \leq x_\ast$. Thus, if $x_\ast$ has some zero entry, we may safely replace the problem with a smaller one by projecting the problem on the subspace of all vectors that have the same zero pattern as $x_\ast$: i.e., we may replace the problem with the one defined by
\[
 \hat a=\Pi a,\, \hat M= \Pi M \Pi^T,\, \hat b(x,y)=\Pi b(\Pi^T x, \Pi^T y),
\]
where $\Pi$ is the orthogonal projector on the subspace
\begin{equation}\label{defW}
W=\{x \in \R^n : \text{$x_i=0$ for all $i$ such that $(x_{\ast})_i=0$}\},
\end{equation}
i.e. the linear operator that removes the entries known to be zero from the vectors.
Performing the above algorithms on the reduced vectors and matrices is equivalent to performing them on the original versions, provided the matrices to invert are nonsingular. Notice, though, that both functional iterations and Newton-type algorithms may break down when the minimal solution is not strictly positive. For instance, consider the problem
\[
 a=\vett{\frac 12}{0},\,
 M=I_2,\,
 b\left(\vett{x_1}{x_2},\vett{y_1}{y_2}\right)=\vett{\frac 12 x_1 y_1}{Kx_1y_2},\,
 x_\ast=\vett{1}{0}.
\]
For suitable choices of the parameter $K$, the matrices to be inverted in the functional iterations (excluding obviously \eqref{fp1}) and Newton's methods are singular; for large values of $K$, none of them are $M$-matrices. However, the nonsingularity and $M$-matrix properties still hold for their restrictions to the subspace $W$ defined in \eqref{defW}. It is therefore important to consider the positivity pattern of the minimal solution in order to get working algorithms.
\subsection{Computing the positivity pattern}
By considering the functional iteration \eqref{fp1}, we may derive a method to infer the positivity pattern of the minimal solution in time $O(n^3)$. Let us denote by $e_t$ the $t$-th vector of the canonical basis, and $e_S=\sum_{s \in S} e_s$ for any set $S\in\{1,\dots,n\}$.

The main idea of the algorithm is following the iteration $x_{k+1}=M^{-1}(a+b(x_k,x_k))$, checking at each step which entries become (strictly) positive. Since the iterates are nondecreasing, once an entry becomes positive for some $k$ it stays positive. It is possible to reduce substantially the number of operations needed if we follow a different strategy to perform these checks. We consider a set $S$ of entries known to be positive at a certain step $x_k$, i.e., $i\in S$ if we already know that $(x_k)_i>0$ at some step $k$. At the first step of the iteration, only the entries $i$ such that $M^{-1}a_i>0$ belong to this set. For each entry $i$, we check whether we can deduce the positiveness of more entries thanks to $i$ and some other $j\in S$ being positive, using the nonzero pattern of $M^{-1}b(\cdot,\cdot)$. As we prove formally in the following, it suffices to consider each $i$ once in this process. Therefore, we consider a second set $T \subseteq S$ of positive entries that have not been checked for the consequences of their positiveness, and examine them one after the other.

We report the algorithm as Algorithm~\ref{algo:posit}, and proceed to prove that it computes the support of $x_\ast$.
\begin{algorithm}
  \KwIn{$a$, $M$, $b$}
  \KwOut{$S=\{i:(x_{\ast})_i>0\}$}
  $S \leftarrow \emptyset$\tcp*{entries known to be positive}
  $T \leftarrow \emptyset$\tcp*{entries to check}
  $a' \leftarrow M^{-1}a$\;
  \For{$i=1$ to $n$}{
  \lIf{$a'_i>0$}{$T \leftarrow T \cup \{i\}$;  $S \leftarrow S \cup \{i\}$\;}
  }
  \While{$T \neq \emptyset$ and $S\neq \{1,2,\dots,n\}$}{
  $t \leftarrow $ some element of $T$\;
  $T \leftarrow T \setminus \{t\}$\;
  $u \leftarrow M^{-1}\left(b(e_S,e_t)+b(e_t,e_S)\right)$\tcp*{or only its positivity pattern}
  \For{$i \in \{1,\dots,n\} \setminus S$}{
  \lIf{$u_i>0$}{$T \leftarrow T \cup \{i\}$;  $S \leftarrow S \cup \{i\}$\;}}
  }
  \KwRet{$S$}
\caption{Compute the positivity pattern of the solution $x_\ast$}\label{algo:posit}
\end{algorithm}
\begin{theorem}
 The above algorithm runs in at most $O(n^3)$ operations.
\end{theorem}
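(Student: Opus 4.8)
The plan is to split the running time of Algorithm~\ref{algo:posit} into three parts — the work done before the \textbf{while} loop, the number of times the loop body is entered, and the cost of a single pass of that body — and to bound each by $O(n^3)$.

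First I would dispose of the preprocessing. Computing $a'=M^{-1}a$ is one linear solve with the nonsingular matrix $M$ and costs $O(n^3)$; at this point I would perform and store an $LU$ factorization of $M$ for later use. If one wants to manipulate only positivity patterns, as the parenthetical comments in the algorithm suggest, one may additionally precompute in $O(n^3)$ the nonzero pattern of $M^{-1}$ — equivalently the transitive closure of the digraph of the off-diagonal entries of $M$; recall that $M^{-1}\geq 0$ because $M$ is a nonsingular $M$-matrix. The initial \textbf{for} loop is a single scan of $a'$, i.e.\ $O(n)$. So everything outside the \textbf{while} loop is $O(n^3)$.

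Next I would show the \textbf{while} loop is entered at most $n$ times. An index is inserted into $T$ only simultaneously with its insertion into $S$; the set $S$ never shrinks; and no index is ever inserted into $S$ twice, since the inner \textbf{for} loop ranges over $\{1,\dots,n\}\setminus S$ and the initial \textbf{for} loop visits each index once. Hence at most $n$ insertions into $T$ occur during the whole run, and since each pass deletes one element of $T$, the body runs at most $n$ times (the guard $S\neq\{1,\dots,n\}$ can only stop it sooner). Then I would bound one pass of the body: popping an element of $T$ is $O(n)$; forming $b(e_S,e_t)+b(e_t,e_S)$, whose $k$-th entry is $\sum_{i\in S}(B_{i t k}+B_{t i k})$, is $O(n^2)$ because it amounts to collapsing one tensor index against the unit vector $e_t$ and summing at most $n$ terms per output component against the $0/1$ vector $e_S$; the subsequent product by $M^{-1}$ is one more solve with the stored factorization, $O(n^2)$ — or, if only the support of $u$ is needed, it is read off in $O(n^2)$ from the stored pattern of $M^{-1}$ and the support of $b(e_S,e_t)+b(e_t,e_S)$; and the inner \textbf{for} loop is $O(n)$. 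So a pass is $O(n^2)$, the loop as a whole is $n\cdot O(n^2)=O(n^3)$, and together with the preprocessing this gives the claimed bound.

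I expect the complexity count above to be essentially routine; the only point deserving explicit care — and the one on which the companion correctness claim rests — is that spreading positivity from the popped index $t$ only against the \emph{current} set $S$, rather than against the final one, loses nothing. That follows from the invariant $T\subseteq S$ together with the monotonicity of $S$: of any two indices that eventually belong to $S$, the one popped later sees the other already in $S$, so every pairwise interaction $b(e_i,e_j)+b(e_j,e_i)$ is eventually taken into account, which is exactly what makes one pass per index sufficient. This bookkeeping, not the arithmetic estimates, is what I would be most careful about.
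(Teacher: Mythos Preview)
Your proof is correct and follows essentially the same approach as the paper: precompute an $LU$ factorization of $M$ so that later solves are $O(n^2)$, observe that each index enters $T$ at most once so the \textbf{while} body runs at most $n$ times, and bound each pass by $O(n^2)$ via the tensor/matrix--vector interpretation of $b(e_S,e_t)+b(e_t,e_S)$. Your final paragraph on the ``current $S$ suffices'' invariant is not needed for the complexity statement itself; it belongs to the separate correctness argument that the paper handles in the subsequent lemmas.
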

\begin{proof}
For the implementation of the sets, we shall use the simple approach to keep in memory two vectors $S,T \in \{0,1\}^n$ and set to 1 the components relative to the indices in the sets. With this choice, insertions and membership tests are $O(1)$, loops are easy to implement, and retrieving an element of the set costs at most $O(n)$.

If we precompute a PLU factorization of $M$, each subsequent operation $M^{-1}v$, for $v \in \R^n$, costs $O(n^2)$. The first \textbf{for} loop runs in at most $O(n)$ operations. The body of the \textbf{while} loop runs at most $n$ times, since an element can be inserted into $S$ and $T$ no more than once ($S$ never decreases). Each of its iterations costs $O(n^2)$, since evaluating $b(e_t,e_S)$ is equivalent to computing the matrix-vector product between the matrix $(B_{tij})_{i,j=1,\dots,n}$ and $e_S$, and similarly for $b(e_S,e_t)$.
\end{proof}
The fact that the algorithm computes the right set may not seem obvious at first sight. We prove this result by resorting to an alternative characterization of the positive entries of $x_\ast$. For fixed $a,M,b$, and for a fixed $i\in\{1,2,\dots,n\}$, we call a sequence $\{S_h\}_{h=1}^N$ of subsets of $\{1,2,\dots,n\}$ \emph{positivity-showing for $i$} if it satisfies the following properties
\begin{enumerate}[i.]
 \item $S_1=\{h \in \{1,\dots,n\} : (M^{-1}a)_h>0 \}$;\label{basecase}
 \item $S_h \subset S_{h+1}$ for each $h\geq 1$;\label{increasingness}
 \item for each $t\in S_{h+1}\setminus S_h$, there are $r,s \in S_h$ such that $(M^{-1}B)_{rst}>0$ (possibly $r=s$);\label{connectedness}
 \item $i\in S_N$. \label{last}
\end{enumerate}
\begin{lemma}
 For each $i\in\{1,2,\dots,n\}$, $(x_\ast)_i>0$ if and only if there exist a positivity-showing sequence for $i$.
\end{lemma}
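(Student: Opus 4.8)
The plan is to prove both directions of the equivalence separately, tying the combinatorial notion of a positivity-showing sequence to the behaviour of the monotone iteration \eqref{fp1} started from $x_0 = 0$. Recall from the proof of \autoref{th:minsol} that the iterates $x_k = M^{-1}(a + b(x_{k-1},x_{k-1}))$ form a nondecreasing sequence converging to $x_\ast$, so in particular $(x_\ast)_i > 0$ if and only if $(x_k)_i > 0$ for some finite $k$. The key auxiliary fact is the description of the support of $x_{k+1}$ in terms of that of $x_k$: since $M^{-1} \geq 0$ and $M^{-1}a \geq 0$ and the tensor $M^{-1}B$ has nonnegative entries, $(x_{k+1})_t > 0$ holds precisely when either $(M^{-1}a)_t > 0$ or there exist $r,s$ with $(x_k)_r > 0$, $(x_k)_s > 0$ and $(M^{-1}B)_{rst} > 0$ (no cancellation can occur among nonnegative summands). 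This is the monotone-arithmetic observation that does all the work.

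For the ``if'' direction, suppose a positivity-showing sequence $\{S_h\}_{h=1}^N$ for $i$ exists. I would prove by induction on $h$ that $S_h \subseteq \{j : (x_h)_j > 0\}$ — more precisely, that every index in $S_h$ is positive in $x_h$ (or, to be safe about indexing, in $x_{h}$ up to an offset; one can simply say $S_h$ is contained in the support of $x_{k}$ for $k \geq h$ and adjust the base case so $S_1$ matches the support of $x_1 = M^{-1}a$). The base case \ref{basecase} is exactly the description of the support of $x_1$. The inductive step uses \ref{connectedness}: any $t \in S_{h+1}\setminus S_h$ has $r,s \in S_h$ with $(M^{-1}B)_{rst} > 0$; by the inductive hypothesis $(x_h)_r, (x_h)_s > 0$, so by the support description above $(x_{h+1})_t > 0$; and indices already in $S_h$ stay positive by monotonicity. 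Then \ref{last} gives $i \in S_N$, hence $(x_N)_i > 0$, hence $(x_\ast)_i > 0$.

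For the ``only if'' direction, suppose $(x_\ast)_i > 0$, so $(x_K)_i > 0$ for some finite $K$. I would simply take $S_h := \{j : (x_h)_j > 0\}$ for $h = 1,\dots,K$ (again possibly with an index shift so that $S_1$ is the support of $M^{-1}a = x_1$). Property \ref{basecase} is immediate, \ref{increasingness} is monotonicity of the $x_h$, property \ref{last} is $(x_K)_i > 0$, and property \ref{connectedness} is exactly the forward implication of the support description: if $t$ is newly positive at step $h+1$ and $(M^{-1}a)_t$ is not positive (which holds since $t \notin S_1 \subseteq S_h$), then the positive value $(x_{h+1})_t = (M^{-1}(a + b(x_h,x_h)))_t$ must come from some term $(M^{-1}B)_{rst}(x_h)_r(x_h)_s > 0$, forcing $r, s \in S_h$.

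Finally, to close the loop with the algorithm, I would note that Algorithm~\ref{algo:posit} maintains exactly the union $\bigcup_h S_h$ of a maximal positivity-showing process: the initial \textbf{for} loop builds $S_1$, and each pass of the \textbf{while} loop, when it processes $t \in T \subseteq S$, adds every $i$ with $(M^{-1}(b(e_S,e_t)+b(e_t,e_S)))_i > 0$, which by bilinearity and nonnegativity is precisely the set of $i$ admitting some $r = t$, $s \in S$ (or symmetrically) with $(M^{-1}B)_{rst} > 0$. Since $S$ only grows and each $t$ need be examined once (any pair $r,s$ witnessing a new positivity will have both elements eventually placed in $T$), the algorithm terminates with $S = \bigcup_h S_h = \{i : (x_\ast)_i > 0\}$ by the lemma. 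The main obstacle I anticipate is purely bookkeeping: pinning down the index offset between $S_h$ and $x_h$ and checking carefully that ``process each $t$ once'' does not miss any witnessing pair — this requires the observation that if $r, s$ both lie in the final set $S$, they were each inserted into $T$ at some point, and whichever is inserted later triggers the check that reveals $t$.
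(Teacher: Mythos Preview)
Your proof is correct and follows essentially the same route as the paper: both directions are tied to the iterates $x_h$ of \eqref{fp1}, with the ``only if'' direction taking $S_h$ to be the support of $x_h$ and the ``if'' direction proving by induction that $S_h$ is contained in the support of $x_h$. Your final paragraph about Algorithm~\ref{algo:posit} is not part of this lemma---the paper proves that separately in the lemma immediately following---so you can drop it here.
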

\begin{proof}
\begin{itemize}
 \item[$\Rightarrow$] Take $i$ such that $(x_\ast)_i>0$, and consider the iteration \eqref{fp1}. Since $x_k\to x_\ast$, we must have $(x_N)_i>0$ for sufficiently large $N$. Then, we can prove that the sequence $S_h=\{i : (x_h)_i>0\}$, $h=1,2,\dots,N$ is positivity-showing for $i$. Conditions \ref{basecase} and \ref{last} are clear; \ref{increasingness} is satisfied because the iteration is monotonic, and \ref{connectedness} is satisfied because we need a nonzero summand in the right-hand side of
\begin{equation}\label{fp1explicit}
(x_{h+1})_k = (M^{-1}a)_k + \sum (M^{-1}B)_{ijk} (x_h)_i (x_h)_j
\end{equation}
for the left-hand side to be positive.
\item[$\Leftarrow$] given a positivity-showing sequence, we can prove by induction on $k$ that $(x_h)_k>0$ for each $h\in S_k$, where $x_h$ are again the iterates of \eqref{fp1}. The base step is condition \ref{basecase}, the inductive step follows from the fact that there is at least a nonzero summand in the right-hand side of \eqref{fp1explicit} and thus the left-hand side is positive. In particular, $(x_N)_i>0$ and thus $(x_\ast)_i>0$. \qedhere
\end{itemize}
\end{proof}
\begin{lemma}
The set $S$ returned by Algorithm~\ref{algo:posit} contains $i$ if and only if there is a positivity-showing sequence for $i$.
\end{lemma}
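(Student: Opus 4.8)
The plan is to prove the two implications separately, matching the algorithm's behaviour against the combinatorial notion of a positivity-showing sequence introduced above. The key observation is that Algorithm~\ref{algo:posit} maintains a set $S$ that only grows, and that every element ever added to $S$ is either in the initial set $\{h : (M^{-1}a)_h>0\}$ or is ``justified'' by two earlier members of $S$ via a nonzero entry of $M^{-1}B$; this is exactly the content of conditions~\ref{basecase}--\ref{connectedness}. So I would first set up notation: let $S^{(k)}$ denote the value of the set $S$ after the $k$-th time the body of the \textbf{while} loop has executed (with $S^{(0)}$ the value after the first \textbf{for} loop), and let $S$ be the final returned set, which equals $S^{(k)}$ for all large $k$ since $S$ stabilizes.

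For the ``if'' direction (a positivity-showing sequence exists $\Rightarrow i \in S$), I would argue that every member of every $S_h$ eventually enters $S$. Proceed by induction on $h$. For $h=1$: by condition~\ref{basecase}, $S_1 = \{h : (M^{-1}a)_h > 0\}$, and these are precisely the indices inserted in the first \textbf{for} loop, so $S_1 \subseteq S^{(0)} \subseteq S$. For the inductive step, assume $S_h \subseteq S$; take $t \in S_{h+1}\setminus S_h$. By condition~\ref{connectedness} there are $r,s \in S_h \subseteq S$ with $(M^{-1}B)_{rst}>0$. Since $r,s$ are in the final $S$, they were at some point placed in $T$; when $r$ (say) was eventually popped from $T$ and processed, $s$ was already in $S$ (or, if $s$ entered $S$ later, then symmetrically $s$ gets processed after $r\in S$). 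Hence at that processing step $e_S \geq e_{\{r,s\}}$ componentwise, so $u = M^{-1}(b(e_S,e_t)+b(e_t,e_S))$ has $u_t \geq (M^{-1}B)_{rst} + (M^{-1}B)_{srt} > 0$ (using nonnegativity of $M^{-1}B$, which holds because $M$ is a nonsingular M-matrix and $b\ge 0$), so $t$ is inserted into $S$ in the subsequent \textbf{for} loop. Thus $S_{h+1}\subseteq S$, and in particular $i \in S_N \subseteq S$.

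For the ``only if'' direction ($i \in S \Rightarrow$ a positivity-showing sequence for $i$ exists), I would simply exhibit one: take $S_h := S^{(h-1)}$ for $h = 1, \dots, N$ where $N$ is chosen large enough that $i \in S^{(N-1)} = S$. Conditions~\ref{basecase} and \ref{last} are immediate, \ref{increasingness} holds because $S$ never shrinks during the algorithm. For \ref{connectedness}: if $t \in S^{(h)} \setminus S^{(h-1)}$, then $t$ was inserted during the $h$-th iteration of the \textbf{while} body, which happens only because $u_t > 0$ for $u = M^{-1}(b(e_{S^{(h-1)}},e_{t_0})+b(e_{t_0},e_{S^{(h-1)}}))$ with $t_0$ the popped element and $t_0 \in S^{(h-1)}$; since every summand of $u_t = \sum_{j\in S^{(h-1)}} \big((M^{-1}B)_{jt_0 t} + (M^{-1}B)_{t_0 j t}\big)$ is nonnegative, positivity forces some $j \in S^{(h-1)}$ with $(M^{-1}B)_{jt_0t}>0$ or $(M^{-1}B)_{t_0jt}>0$, giving the required pair $r,s \in S_h = S^{(h-1)}$.

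Combining the two lemmas (the previous one characterizing $(x_\ast)_i>0$ by positivity-showing sequences, and this one characterizing $i\in S$ the same way) yields that $S = \{i : (x_\ast)_i > 0\}$, proving the theorem. The main subtlety — and the step I would be most careful about — is the bookkeeping in the ``if'' direction: one must check that when an element $r\in S$ is processed (popped from $T$), all the other members of $S$ needed as its ``partner'' $s$ are available, which requires noting that every element placed in $S$ is also placed in $T$ and is eventually processed before the algorithm terminates (the \textbf{while} loop exits only when $T=\emptyset$ or $S$ is everything, and in the latter case we are trivially done), so no justification is ever ``lost''. The nonnegativity of $M^{-1}B$ is used repeatedly to pass from positivity of a sum to positivity of an individual term, and should be stated explicitly.
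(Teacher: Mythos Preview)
Your proof is correct and follows essentially the same approach as the paper's: for the ``only if'' direction you both exhibit the successive values of $S$ as a positivity-showing sequence, and for the ``if'' direction you both argue inductively (you along the levels $S_h$ of a fixed sequence, the paper on the length $N$ of a shortest one) that the relevant elements eventually enter $S$ because the last of $r,s$ to be popped finds the other already in $S$. One slip to fix: in your ``if'' direction, when $r$ (or $s$) is popped the algorithm computes $u=M^{-1}\bigl(b(e_S,e_r)+b(e_r,e_S)\bigr)$, not $M^{-1}\bigl(b(e_S,e_t)+b(e_t,e_S)\bigr)$ as you wrote---$t$ is your target index, not the popped element---though your ensuing bound $u_t\geq (M^{-1}B)_{rst}+(M^{-1}B)_{srt}>0$ is correct once this is amended.
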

\begin{proof}
\begin{itemize}
\item[$\Rightarrow$] If at some step of the algorithm we have $i\in S$, then the values of $S$ at every previous step of the algorithm form a positivity-showing sequence.
\item[$\Leftarrow$] We prove the result by induction on the length of the shortest positivity-showing sequence for each given $i$. The case $N=1$ is clear, since it must be the case that $(M^{-1}a)_i>0$. Let us now suppose that the result is proved for all $i$ for which the shortest positivity-showing sequence has length $N-1$, and prove the claim for $N$. By condition \ref{connectedness}, there are $r,s \in S_{N-1}$ such that $(M^{-1}B)_{rsi}>0$. The sequence $S_1,S_2,\dots,S_{N-1}$ is a positivity-showing sequence of length $N-1$ for all the elements of $S_{N-1}$, thus by inductive hypothesis $r$ and $s$ enter $S$ (and at the same time $T$) at some step of the algorithm. If the \textbf{while} cycle terminates because $S=\{1,2,\dots,n\}$, there $i\in S$ and there is nothing to prove. Otherwise, the algorithm terminates because $T=\emptyset$, and thus $r$ and $s$ are removed from $T$ at some step after being inserted. In the iteration of the \textbf{while} cycle in which either one of them is removed from $T$, we have $u_i>0$ and thus $i$ enters $S$. \qedhere
\end{itemize}
\end{proof}
The two lemmas proved above together imply that Algorithm~\ref{algo:posit} computes the correct set $S$.

It is a natural question to ask whether for the cases E3 and E4 it is possible to use the special structure of $M$ and $b$ in order to develop a similar algorithm with running time $O(m^3)$, that is, the same as the cost per step of the basic iterations. Unfortunately, we were unable to go below $O(m^4)$. It is therefore much less appealing to run this algorithm as a preprocessing step, since its cost is likely to outweigh the cost of the actual solution. However, we remark that the strict positiveness of the coefficients is usually a property of the problem rather than of the specific matrices involved, and can often be solved in the model phase before turning to the actual computations. An algorithm such as the above one would only be needed in an ``automatic'' subroutine to solve general instances of the problems E3 and E4. 

\section{Other concrete cases}
In Bini \emph{et al.} \cite{BiniLatoucheMeiniTreeLike}, the matrix equation
\[
 X+\sum_{i=1}^d A_iX^{-1}D_i=B-I
\]
appears, where $B,A_i,D_i \geq 0$ and the matrices $B+D_j+\sum_{i=1}^d A_i$ are stochastic. The solution $X=T-I$, with $T \geq 0$ minimal and sub-stochastic, is sought. Their paper proposes a functional iteration and Newton's method. By setting $Y=-X^{-1}$ and multiplying both sides by $Y$, we get
\[
 (I-B)Y=I+\sum A_i Y D_i Y,
\]
which is again in the form \eqref{qve}. It is easy to see that $Y$ is nonnegative whenever $T$ is substochastic, and $Y$ is minimal whenever $T$ is.

The paper considers two functional iterations and the Newton method; all these algorithms are expressed in terms of $X$ instead of $Y$, but they essentially coincide with those exposed in the present paper.

\section{Research lines}
There are many open questions that could yield a better theoretical understanding of this class of equations or better solution algorithms.
\begin{itemize}
 \item Is there a way to translate to our setting the spectral theory of E4 (see e.g. Bini \emph{et al.} \cite[chapter 3]{BiniLatoucheMeiniBook})?
 \item The shift technique \cite[chapter 3]{BiniLatoucheMeiniBook} is a method to transform a singular problem (i.e. one in which $F'_{x_\ast}$ is singular) of the kind E4 (or also E3, see e.g. \cite{GuoQBDRed,BiniIannazzoLatoucheMeini}) to a nonsingular one. Is there a way to adapt it to a generic quadratic vector equation? Is there a similar technique for near-to-singular problems, which are the most difficult to solve in the applications?
 \item As we discussed in the \autoref{s:nvscr}: is there an explicit algebraic relation among Newton's method and Logarithmic/Cyclic Reduction, or an interpretation of the latter as an inexact Newton method?
 \item Instead of \eqref{funit}, one could consider the slightly more general form
\[
 (N-b_1(\cdot,x_k)-b_3(x_k,\cdot))x_{k+1}=a+Px_k+b_2(x_k,x_k),
\]
where $b=b_1+b_2+b_3$ and $M=N-P$. This notation would incorporate the two variants of the \emph{order} algorithm at the same time. We can prove as in Theorem~\ref{optfunit} that $P=b_2=0$ is the best choice (among those with $P,b_1,b_2,b_3\geq 0$), but it is not clear how to determine \emph{a priori} the choice of $b_1$ and $b_3$ which gives the fastest convergence. Also, is there an explicit relation between the \emph{thicknesses} method of Hautphenne \emph{et al.} \cite{HautphenneLatoucheRemiche} and the symmetrized functional iteration given by $b_1=b_3=\frac 12 b$?
 \item Can this approach be generalized to the positive definite ordering on symmetric matrices ($A \geq B$ if $A-B$ is positive semidefinite)? This would lead to the further unification of the theory of a large class of equations, including the algebraic Riccati equations appearing in control theory \cite{LancasterRodman}. A lemma proved by Ran and Reurings \cite[theorem 2.2]{RanReurings} could replace the first point of \autoref{Mmatrices} in an extension of the results of this paper to the positive definite ordering.
 \end{itemize}
 
\section{Acknowledgment}
The author wishes to show his gratitude to an anonymous referee for identifying some problematic issues with the first version of this paper, suggesting the proof of item~\ref{Mmatrices5} of Theorem~\ref{Mmatrices} and providing additional literature references.

\bibliographystyle{abbrv}
\bibliography{qve}

\end{document}